\newtheorem{theorem}{Theorem}[section]
\newtheorem{lemma}[theorem]{Lemma}
\newtheorem{proposition}[theorem]{Proposition}
\newtheorem{corollary}[theorem]{Corollary}
\theoremstyle{definition}
\newtheorem*{remark}{Remark}
\title
[The VMO-Teichm\"uller space]{The VMO-Teichm\"uller space and the variant of Beurling--Ahlfors extension by heat kernel}
\author[H. Wei]{Huaying Wei} 
\address{Department of Mathematics and Statistics, Jiangsu Normal University \endgraf Xuzhou 221116, PR China} 
\curraddr{Department of Mathematics, School of Education, Waseda University \endgraf
Shinjuku, Tokyo 169-8050, Japan}
\email{hywei@jsnu.edu.cn} 
\author[K. Matsuzaki]{Katsuhiko Matsuzaki}
\address{Department of Mathematics, School of Education, Waseda University \endgraf
Shinjuku, Tokyo 169-8050, Japan}
\email{matsuzak@waseda.jp}
\subjclass[2020]{Primary 32G15, 30C62, 30H25, 30H35; Secondary 42A45, 26A46, 46G20}
\keywords{VMO Teichm\"uller space, Beurling--Ahlfors extension, vanishing Carleson measure, $A_\infty$-weight, BMO function}
\thanks{Research supported by 
Japan Society for the Promotion of Science (KAKENHI 18H01125 and 21F20027).}
\begin{document}

\maketitle

\begin{abstract}
We give a real-analytic section for the Teichm\"uller projection onto
the VMO-Teichm\"uller space by using the variant of Beurling--Ahlfors extension by heat kernel introduced by Fefferman, 
Kenig and Pipher in 1991. Based on this result, 
we prove that the VMO-Teichm\"uller space can be endowed with a real Banach manifold structure that is real-analytically equivalent to its complex Banach manifold structure. We also obtain that
the VMO-Teichm\"uller space admits a real-analytic contraction mapping. 
\end{abstract}

\section{Introduction}
\subsection{Background on the universal Teichm\"uller space}
A sense-preserving homeo\-morphism $h$ of the unit circle $\mathbb{S} = \{z\in\mathbb{C} \mid  |z|=1\}$ onto itself
is said to be {\it quasisymmetric} if there exists a least positive constant $C(h)$, called the quasisymmetry constant of $h$, such that 
\[
\frac{\mid h(I_1)\mid}{\mid h(I_2) \mid}\leqslant C(h)
\]
for all pairs of adjacent intervals $I_1$ and $I_2$ on $\mathbb{S}$ with the same length $|I_1|=|I_2|$. 
Beurling and Ahlfors \cite{BA} in 1956 proved that a sense-preserving homeomorphism $h$ of $\mathbb{S}$ is quasi\-symmetric if and only if there exists some quasiconformal homeomorphism of the unit disk $\mathbb{D} = \{z\in\mathbb{C} \mid  |z|<1\}$ onto itself that has boundary value $h$. Later, Douady and Earle \cite{DE} in 1986 gave a quasiconformal extension of a quasisymmetric homeomorphism of $\mathbb{S}$ in a conformally natural way. 

The {\it universal Teich\-m\"ul\-ler space} $T$ is a universal parameter space of marked complex structures on
all Riemann surfaces and can be defined as the group $\mbox{QS}(\mathbb S)$ of all quasisymmetric homeomorphisms of 
$\mathbb{S}$ modulo the left action of the group 
$\mbox{\rm M\"ob}(\mathbb{S})$ of all M\"obius transformations of $\mathbb{S}$, i.e., 
$T= \mbox{\rm M\"ob}(\mathbb{S}) \backslash \mbox{QS}(\mathbb S)$. The quotient by $\mbox{\rm M\"ob}(\mathbb{S})$ is alternatively
achieved by giving a normalization to the elements in $\mbox{QS}(\mathbb S)$. We identify $T$ as the set of all quasisymmetric homeomorphisms of $\mathbb S$ that fix three points, $1$, $-1$ and $i$. This is the real model of the universal Teichm\"uller space $T$. 

Let us recall other descriptions of the universal Teich\-m\"ul\-ler space $T$. 
Let $M(\mathbb D)$ denote the set of all Beltrami coefficients on $\mathbb D$. 
By the measurable Riemann mapping theorem, for any $\mu \in M(\mathbb D)$, 
there is a unique quasiconformal homeomorphism $f_{\mu}$ of $\mathbb D$ onto itself whose complex dilatation is $\mu$ and 
that is normalized by fixing $1$, $-1$ and $i$. There is also a unique quasiconformal homeomorphism 
$f^{\mu}$ of the Riemann sphere $\widehat{\mathbb C}$ with complex dilatation $\mu$ in $\mathbb D$ and 
$0$ in $\mathbb D^* = \hat{\mathbb C} - \overline{\mathbb D}$ that is normalized, for example, by the Laurent expansion
$f^{\mu}(z) = z + \frac{b_1}{z} + \cdots$ at $\infty$. 
The image curve $\Gamma$ of the unit circle $\mathbb S$ under $f^{\mu}$ is called a quasicircle.  
Let $\Omega$ denote the inner domain of $\Gamma$, and let $g$ be the conformal mapping of $\mathbb D$ onto $\Omega$ such that 
$f_{\mu}|_{\mathbb S} = (g|_{\mathbb S})^{-1}\circ f^{\mu}|_{\mathbb S}$. This gives the conformal welding with respect to $f^{\mu}$,
and $f_\mu|_{\mathbb S}$ belongs to $\mbox{QS}(\mathbb S)$.
We say that two elements $\mu$ and $\nu$ in $M(\mathbb D)$ are equivalent, denoted by $\mu \sim \nu$, if $f^{\mu}|_{\mathbb D^*} = f^{\nu}|_{\mathbb D^*}$, or equivalently, $f_{\mu}|_{\mathbb S} = f_{\nu}|_{\mathbb S}$. Therefore, the universal Teichm\"uller space $T$ can also be defined as follows.

\begin{itemize}
\item
$T$ is the set $M(\mathbb D)/\hspace{-2mm}\sim$ of all equivalence classes $[\mu]$. This is the Beltrami coefficient model of $T$ $([\mu]\mapsto f_{\mu}|_{\mathbb S})$.

\item
$T$ is the set of all quasicircles (up to a M\"obius transformation of  $\hat{\mathbb C}$). Here, a Jordan curve $\Gamma$ is said to be a {\it quasicircle} if, by definition, there is a positive constant $C(\Gamma)$ such that
$$
\frac{\mbox{diameter}(\widetilde{z_1z_2})}{|z_1 - z_2|} \leq C(\Gamma)
$$
for the smaller subarc $\widetilde{z_1z_2}$ of $\Gamma$ joining any two finite points $z_1$ and $z_2$ on $\Gamma$. 
This is the Jordan curve model of $T$ $([\mu] \mapsto f^{\mu}(\mathbb S))$. 

\item
$T$ is the set of all conformal mappings $f^{\mu}$ on $\mathbb D^*$ (up to a M\"obius transformation of $\hat{\mathbb C}$)  which can be extended to a quasiconformal mapping in $\hat{\mathbb C}$. This is the conformal mapping model of $T$ 
$([\mu] \mapsto f^{\mu}|_{\mathbb D^*})$.
\end{itemize}

The different models of $T$ are identified with each other by the one-to-one maps given in the brackets.  
It is known that $T$ has a unique complex Banach manifold structure such that the Teichm\"uller projection $\pi: M(\mathbb D) \to T$ sending $\mu$ to the equivalence class $[\mu]$  is a holomorphic split submersion in the exact sense
(see \cite[Theorem V.5.3]{Le}, \cite[Sections 3.4, 3.5]{Na}).

\subsection{Background on the VMO-Teichm\"uller space}
The VMO-Teichm\"uller space $T_v$ is a subspace of the universal Teichm\"uller space $T$ introduced from the viewpoint of harmonic analysis. Its real model is the set of all normalized strongly symmetric homeomorphisms of $\mathbb S$. Here, by  
a {\it strongly symmetric homeomorphism} $h$ of $\mathbb S$, we mean that it is absolutely continuous with $\log h' \in \mbox{VMO}(\mathbb S)$, the function space on $\mathbb S$ of vanishing mean oscillation.  By the results of Pommerenke \cite{Pom78} in 1978, Dyn$'$kin \cite{Dy} in 1997, and Shen and Wei \cite{SW} in 2013, the following conditions are equivalent (see \cite{SW} for the comprehensive information):
\begin{itemize}
\item
[$(V_0)$] 
$f_{\mu}|_{\mathbb S}$ is a strongly symmetric homeomorphism of $\mathbb S$.
\end{itemize}

A positive measure $\lambda$ defined on $\mathbb D$ 
is called a {\it Carleson measure} if
\begin{equation}\label{CM}
\Vert \lambda \Vert_c = \sup \frac{\lambda(S_{h,\theta_0})}{h}
\end{equation}
is finite, where the supremum is taken over all sectors
\begin{align}\label{sector}
S_{h,\theta_0} &= \{re^{i\theta} \in \mathbb D \mid 1-h \leqslant r < 1, \;|\theta - \theta_0| \leqslant \pi h\} 
\end{align}
for $h \in (0, 1]$ and $\theta_0 \in [0, 2\pi)$.
A Carleson measure $\lambda$ is called a {\it vanishing Carleson measure} if 
\begin{equation*}\label{CM0}
\lim_{h \to 0} \frac{\lambda(S_{h,\theta_0})}{h} = 0
\end{equation*}
uniformly for $\theta_0 \in [0, 2\pi)$. We denote by $\mbox{CM}(\mathbb D)$ and $\mbox{CM}_0(\mathbb D)$ the set of all Carleson measures and vanishing Carleson measures on $\mathbb D$, respectively. Such measures on $\mathbb D^*$ are defined similarly.

\begin{itemize}
\item[($V_1$)] 
There exists some $f_\mu$ whose complex dilatation induces a vanishing Carleson measure 
$\lambda_{\mu} = |\mu(z)|^2/(1 - |z|^2)dxdy \in \mbox{CM}_0(\mathbb D)$.   
\end{itemize}

Let ${\mathcal M}_0(\mathbb D) \subset M(\mathbb D)$ be the subset of Beltrami coefficients $\mu$
that induce vanishing Carleson measures $\lambda_{\mu}$. 
The Beltrami coefficient model of the VMO-Teichm\"uller space $T_v=T_v(\mathbb D)$ is given by the image of the 
Teichm\"uller projection $\pi:{\mathcal M}_0(\mathbb D) \to T_v$. 

\begin{itemize}
\item[($V_2$)] $\Gamma = f^{\mu}(\mathbb S)$ is a bounded {\it asymptotically smooth curve in the sense of Pommerenke} \cite{Pom78}. Namely, 
$$
\lim_{t \to 0}\sup_{|z_1 - z_2| \leq t} \frac{\mbox{arc-length}(\widetilde{z_1z_2})}{|z_1 - z_2|} = 1, \qquad \mbox{for any } z_1, z_2 \in \Gamma. 
$$

\item[($V_3$)] $\mathcal L_{f^{\mu}} = \log (f^{\mu})'$ belongs to $\mbox{VMOA}(\mathbb D^*)$, the space of analytic functions in $\mathbb D^*$ of vanishing mean oscillation. This can be also characterized by the condition that  
$\mathcal S_{f^{\mu}} = \mathcal L_{f^{\mu}}'' - \frac{1}{2}(\mathcal L_{f^{\mu}}')^2$ belongs to $\mathcal B_0(\mathbb D^*)$.
\end{itemize}

Here, an analytic function $\phi$ on $\mathbb D^*$ belongs to $\mbox{VMOA}(\mathbb D^*)$
if and only if 
\begin{equation}\label{CML}
\lambda_\phi=|\phi'(z)|^2(|z|^2 - 1) dxdy \in \mbox{CM}_0(\mathbb D^*).
\end{equation}
Moreover,
let $\mathcal B_0(\mathbb D^*)$ denote the Banach space of holomorphic functions $\varphi$ in $\mathbb D^*$ each of which induces a vanishing Carleson measure 
\begin{equation}\label{CMS}
\lambda_{\varphi} = |\varphi(z)|^2(|z|^2 - 1)^3 dxdy \in \mbox{CM}_0(\mathbb D^*). 
\end{equation}
Via the Bers embedding given by the Schwarzian derivative $\mathcal S_{f^{\mu}|_{\mathbb D^*}}$, $T_v$ is embedded into 
the space $\mathcal B_0(\mathbb D^*)$, which endows $T_v$ with a natural complex Banach manifold structure, so that the Teichm\"uller projection $\pi$ from $\mathcal M_0(\mathbb D)$ to $T_v$ is holomorphic with a local holomorphic right inverse 
(see \cite[Theorem 5.1]{SW}).   

\subsection{Main results and plan of the paper}
In 1974, Earle \cite{E1} proved that the Beurling--Ahlfors extension gives the Teichm\"uller projection $\pi$ 
a global real-analytic section (right inverse) for the universal Teichm\"uller space $T$.  
In 1986, Douady and Earle \cite{DE} proved that this is also true for the conformally barycentric extension.
In 2016, Tang, Wei and Shen \cite{TWS} showed that the conformally barycentric extension
yields a continuous section for the VMO-Teichm\"uller space $T_v$. However, the existence of the real-analytic section for $T_v$ is not known yet in the literature. In this paper, we provide a real-analytic section for $T_v$. 
Our real-analytic section is given by the variant of Beurling--Ahlfors extension by heat kernel introduced by Fefferman, Kenig and Pipher \cite{FKP} in 1991 (see Section 4). 

We regard $T_v$ as its real model. For any $h \in T_v$ with $u=\log|h'|$, 
we denote $\mu_u$ the complex dilatation of the variant of Beurling--Ahlfors extension by heat kernel of $h$. 
Precisely speaking, this quasiconformal extension is done from the real line to the upper half-plane,
but under the lift and the projection by the universal cover $z \mapsto e^{2\pi iz}$,
we can also consider this extension from the unit circle to the unit disk.
Let ${\rm VMO}_{\mathbb R}(\mathbb S)$ be the real Banach space of
all real-valued functions on $\mathbb S$ of vanishing mean oscillation.  
Then, our main results can be summarized as follows. They 
are given in the corresponding claims specified after the statements.
 
\begin{theorem}\label{mains}
The following two statements hold:
\begin{enumerate} 
\item[(i)] The map $\widetilde L: T_v \to {\rm VMO}_{\mathbb R}(\mathbb S)$ 
sending $h$ to $u = \log |h'|$ is real-analytic {\rm (Corollary \ref{holo2}).}
\item[(ii)] The map $\widetilde\Lambda: {\rm VMO}_{\mathbb R}(\mathbb S) \to \mathcal M_0(\mathbb D)$ sending $u$ 
to $\mu_u$ is real-analytic {\rm (Theorem \ref{realana}).}
\end{enumerate}
Consequently, the composition $\widetilde\Lambda\circ \widetilde L: T_v \to \mathcal M_0(\mathbb D)$ is 
real-analytic such that $\pi\circ \widetilde\Lambda\circ \widetilde L$ is the identity on $T_v$. This in particular gives a global real-analytic section
for the Teichm\"uller projection $\pi: \mathcal M_0(\mathbb D) \to T_v$. 
\end{theorem}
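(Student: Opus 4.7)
The plan is to establish (i) and (ii) separately, then assemble the composition and verify the section property. Parts (i) and (ii) are of very different character: (i) is a soft consequence of the complex Banach manifold structure on $T_v$ coming from the Bers embedding, while (ii) is the main analytic input and the bulk of the work.

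For part (i), I would exploit the already-available complex Banach manifold structure on $T_v$ via the Bers embedding $[\mu] \mapsto \mathcal{S}_{f^\mu|_{\mathbb{D}^*}} \in \mathcal{B}_0(\mathbb{D}^*)$. The relation $\mathcal{S}_{f^\mu} = \mathcal{L}_{f^\mu}'' - \tfrac{1}{2}(\mathcal{L}_{f^\mu}')^2$ is a Riccati ODE that can be inverted with holomorphic dependence on the coefficient, so $[\mu] \mapsto \mathcal{L}_{f^\mu} = \log (f^\mu)' \in \mathrm{VMOA}(\mathbb{D}^*)$ is holomorphic. Taking non-tangential boundary values and real parts realizes $[\mu] \mapsto \log|(f^\mu)'|_{\mathbb{S}}$ as a real-analytic map into $\mathrm{VMO}_{\mathbb{R}}(\mathbb{S})$. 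The same argument applied to the interior welding Riemann map $g: \mathbb{D} \to \Omega$ gives $[\mu] \mapsto \log|g'|_{\mathbb{S}}$ real-analytically; combining via the welding identity
\[
\log|h'(z)| = \log|(f^\mu)'(z)| - \log|g'(h(z))|, \qquad z \in \mathbb{S},
\]
together with real-analyticity of the composition map $(u,h) \mapsto u \circ h$ in the appropriate VMO setting, yields (i).

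For part (ii), the main technical step, I would proceed in two stages. First, since $h$ is determined up to a rigid rotation by $u=\log|h'|$ through integration of $e^u$, the heat-kernel Beurling--Ahlfors extension can be written out explicitly as a ratio of Gaussian convolutions of $e^u$ and its relatives; computing the complex dilatation $\mu_u$ and expanding in powers of $u$ represents $u \mapsto \mu_u$ as a convergent power series of symmetric multilinear operators, giving real-analyticity into $L^\infty(\mathbb{D})$. The main obstacle is then the Carleson endpoint: showing that the series actually converges in the vanishing Carleson norm governing $\mathcal{M}_0(\mathbb{D})$, which requires proving that each multilinear coefficient carries VMO oscillation of $u$ into the vanishing Carleson class in a quantitatively controlled way. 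This is precisely where the Fefferman--Kenig--Pipher heat-kernel framework is tailor-made: the Gaussian smoothing translates oscillation bounds on $u$ directly into Carleson bounds on $|\mu_u|^2/(1-|z|)\,dxdy$, and the vanishing statement comes from the corresponding vanishing of the oscillation of $u$ at small scales.

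Granted (i) and (ii), the composition $\widetilde{\Lambda}\circ\widetilde{L}: T_v \to \mathcal{M}_0(\mathbb{D})$ is real-analytic as a composition of real-analytic maps between real Banach manifolds. Finally, $\pi \circ \widetilde{\Lambda} \circ \widetilde{L} = \mathrm{id}_{T_v}$ is essentially tautological: $\widetilde{\Lambda}\circ\widetilde{L}$ sends $h$ to the complex dilatation of a specific quasiconformal extension of $h$ to $\mathbb{D}$, so projecting via $\pi$ recovers exactly the equivalence class of $h$, and hence $h$ itself under the three-point normalization.
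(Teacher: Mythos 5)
Your outline for part (i) hinges on a step that is not available: after writing $\log|h'| = \log|(f^\mu)'| - \log|g'\circ h|$, you invoke ``real-analyticity of the composition map $(u,h)\mapsto u\circ h$ in the appropriate VMO setting.'' No such joint analyticity (or even differentiability) is known, and it is exactly the difficulty of part (i): for $u$ merely in ${\rm VMO}$ the map $h\mapsto u\circ h$ has no smoothness in $h$, since $u$ itself need not be differentiable, and in your decomposition \emph{both} $g$ and $h$ vary with $\mu$, so you cannot reduce to composition with a fixed map. The paper's proof is engineered to avoid this: it complexifies, identifying the space of strongly symmetric embeddings ${\rm SS}_{\mathbb C}(\mathbb S)$ with $T_v(\mathbb D)\times T_v(\mathbb D^*)$, and proves that $\gamma\mapsto\log\gamma'$ is holomorphic by Hartogs' theorem, checking separate holomorphy; in each slice the second coordinate is \emph{frozen}, so the composition operator that appears is the pull-back $P_f$ by a fixed strongly symmetric $f$, which is a bounded \emph{linear} operator on ${\rm VMO}(\mathbb S)$ (Jones' theorem), and hence harmless. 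Real-analyticity of $\widetilde L$ then follows by restricting the holomorphic map to the real-analytic diagonal $\{([\mu],[\mu])\}$, the fixed locus of an anti-holomorphic involution. Your first term ($[\mu]\mapsto\log(f^\mu)'$ holomorphic via the pre-Schwarzian) is fine and corresponds to the paper's use of the biholomorphism between the $\mathcal L$- and $\mathcal S$-models, but without a substitute for the composition step the argument does not close.

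For part (ii) the proposal names the right ingredients but leaves the analytic core unproved. The dilatation is a quotient $\mu_u=(e^u\ast\alpha_y)/(e^u\ast\beta_y)$, so before any ``power series in $u$'' one must bound the denominator away from zero and control $e^u\ast\alpha_y$ in both the $L^\infty$ and the Carleson norms; this requires the $A_\infty$/John--Nirenberg machinery, maximal-function and Littlewood--Paley $L^4$ estimates, and a near/far decomposition, and these bounds are only obtained for $w$ in a small BMO-neighborhood of a \emph{bounded} real function $u_0$. That restriction is not incidental: the passage from such local statements to real-analyticity on all of ${\rm VMO}_{\mathbb R}(\mathbb S)$ uses the density of $L^\infty_{\mathbb R}$ in VMO (the very reason the analogous statement for the BMO-Teichm\"uller space remains open), a point your proposal never addresses. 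The paper also replaces your term-by-term multilinear convergence argument by a softer route: it allows complex-valued $w\in{\rm BMO}(\mathbb R)$, proves local boundedness of $w\mapsto\mu_w$, and applies a ``pointwise holomorphy plus local boundedness implies holomorphy'' lemma for integral-type norms, after which real-analyticity on the real subspace is automatic; finally it transfers the result from the line/half-plane (where the Fefferman--Kenig--Pipher extension lives) to the circle/disk by periodization and an explicit Carleson-norm comparison, another step missing from your sketch. Your concluding remarks on the composition and on $\pi\circ\widetilde\Lambda\circ\widetilde L={\rm id}$ are correct, but as it stands both (i) and (ii) rest on unproved claims.
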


Based on the observation that $\widetilde L^{-1}  = \pi\circ \widetilde\Lambda$, we conclude  
that $\widetilde L^{-1}: {\rm VMO}_{\mathbb R}(\mathbb S) \to T_v$ is also real-analytic.    

\begin{corollary}\label{mainss}
$\widetilde L: T_v \to {\rm VMO}_{\mathbb R}(\mathbb S)$ is a real-analytic homeomorphism from $T_v$ onto ${\rm VMO}_{\mathbb R}(\mathbb S)$ whose inverse $\widetilde L^{-1}$ is also real-analytic. 
\end{corollary}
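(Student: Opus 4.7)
The plan is to deduce the corollary directly from Theorem \ref{mains} by verifying the identity $\widetilde L^{-1} = \pi \circ \widetilde\Lambda$ highlighted in the paragraph preceding the statement. The content of the corollary splits naturally into three parts: bijectivity of $\widetilde L$, identification of the inverse with $\pi\circ\widetilde\Lambda$, and real-analyticity of that inverse.

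First I would establish bijectivity. Theorem \ref{mains} already asserts $(\pi\circ\widetilde\Lambda)\circ\widetilde L = \mathrm{id}_{T_v}$, which immediately forces $\widetilde L$ to be injective and exhibits $\pi\circ\widetilde\Lambda$ as a left inverse. For surjectivity, given $u\in\mathrm{VMO}_{\mathbb R}(\mathbb S)$, I would unpack the construction of $\widetilde\Lambda(u)=\mu_u$ from Section~4: it is the complex dilatation of the heat-kernel variant of the Beurling--Ahlfors extension of the unique normalized (fixing $1,-1,i$) strongly symmetric homeomorphism $h_u$ of $\mathbb S$ with $\log|h_u'|=u$. Since this quasiconformal extension restricts on $\mathbb S$ to $h_u$ itself, we get $\pi(\mu_u)=h_u\in T_v$, and consequently $\widetilde L(\pi(\widetilde\Lambda(u))) = \log|h_u'| = u$. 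Hence $\widetilde L\circ(\pi\circ\widetilde\Lambda) = \mathrm{id}_{\mathrm{VMO}_{\mathbb R}(\mathbb S)}$, which together with the other one-sided identity produces the equality $\widetilde L^{-1}=\pi\circ\widetilde\Lambda$ as set-theoretic maps.

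Next I would establish real-analyticity of $\widetilde L^{-1}$ from the right-hand side of this identity. By Theorem \ref{mains}(ii), the map $\widetilde\Lambda:\mathrm{VMO}_{\mathbb R}(\mathbb S)\to\mathcal M_0(\mathbb D)$ is real-analytic. By the construction of the complex Banach manifold structure on $T_v$ recalled in the introduction (via the Bers embedding into $\mathcal B_0(\mathbb D^*)$), the Teichm\"uller projection $\pi:\mathcal M_0(\mathbb D)\to T_v$ is holomorphic, and any holomorphic map between complex Banach manifolds is in particular real-analytic when both sides are regarded over $\mathbb R$. Composing, $\widetilde L^{-1}=\pi\circ\widetilde\Lambda$ is real-analytic. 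Combined with the real-analyticity of $\widetilde L$ from Theorem \ref{mains}(i), both $\widetilde L$ and $\widetilde L^{-1}$ are continuous, so $\widetilde L$ is a homeomorphism and the corollary follows.

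There is no serious obstacle here; essentially all the analytic substance has been absorbed into Theorem \ref{mains}. The only point requiring care is the verification that $\pi\circ\widetilde\Lambda$ acts as a genuine right inverse to $\widetilde L$, and this reduces to observing that the heat-kernel extension is in fact an extension, i.e.\ its boundary values recover the homeomorphism $h_u$ from which $\mu_u$ was built. Once the definitions are aligned with the normalizations fixing $1,-1,i$, this is immediate.
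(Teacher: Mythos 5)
Your proposal is correct and follows essentially the same route as the paper: the paper likewise deduces the corollary from Theorem \ref{mains} via the observation $\widetilde L^{-1}=\pi\circ\widetilde\Lambda$, with real-analyticity of the inverse coming from the holomorphy of $\pi$ and Theorem \ref{realana}, and you merely make explicit the bijectivity check (that the heat-kernel extension has boundary value $h_u$, so $\widetilde L\circ\pi\circ\widetilde\Lambda=\mathrm{id}$) that the paper leaves implicit. The only cosmetic slip is the normalization: in this paper ${\rm SS}(\mathbb S)$ is normalized by $h(1)=1$ and $\int_{-\pi}^{\pi}h(e^{it})\,dt=0$ rather than by fixing $1,-1,i$, but since ${\rm VMO}_{\mathbb R}(\mathbb S)$ is taken modulo constants this does not affect the argument.
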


This implies that the real model of $T_v$ can be endowed with a real Banach manifold structure from ${\rm VMO}_{\mathbb R}(\mathbb S)$ by $\widetilde L$, which is real-analytically equivalent to the 
natural complex Banach manifold structure on $T_v$ defined by ($V_3$). 
 
The second immediate consequence of Theorem \ref{mains} is the contractibility of $T_v$
in a stronger sense:

\begin{corollary}
The VMO-Teichm\"uller space $T_v$ admits
a real-analytic contraction. Precisely, the contraction $\Phi:T_v\times [0, 1] \to T_v$ is defined by 
$\Phi(h, t)= \pi\left((1 - t)\widetilde\Lambda\circ \widetilde L (h)\right)$, and $\Phi$ is 
a real-analytic mapping. Alternatively, another real-analytic contraction $\Phi'$ can be given by
$\Phi'(h,t)=\widetilde L^{-1}((1-t) \widetilde L(h))$.
\end{corollary}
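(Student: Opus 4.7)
The plan is to verify, for each proposed formula, that (a) the map takes values in $T_v$ throughout the time interval, (b) it equals the identity at $t=0$ and a constant at $t=1$, and (c) it is real-analytic. The analytic content is supplied by Theorem~\ref{mains} and Corollary~\ref{mainss}; everything else reduces to routine bookkeeping.

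For $\Phi(h,t) = \pi((1-t)\widetilde\Lambda\circ\widetilde L(h))$, set $\mu = \widetilde\Lambda\circ\widetilde L(h) \in \mathcal M_0(\mathbb D)$. The scaled coefficient satisfies $\lambda_{(1-t)\mu} = (1-t)^2\lambda_\mu$, dominated by $\lambda_\mu \in \mbox{CM}_0(\mathbb D)$, and $\|(1-t)\mu\|_\infty \le \|\mu\|_\infty < 1$ for $t \in [0,1]$, so $(1-t)\mu$ remains in $\mathcal M_0(\mathbb D)$ and $\Phi$ lands in $T_v$. Theorem~\ref{mains} then yields $\Phi(h,0) = \pi\circ\widetilde\Lambda\circ\widetilde L(h) = h$ and $\Phi(h,1) = \pi(0) = [\mathrm{id}]$. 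For real-analyticity, extend the parameter to an open interval around $[0,1]$: since $\|\mu\|_\infty$ is locally bounded strictly below $1$ on $T_v$, the scaled coefficient $(1-t)\mu$ stays in $\mathcal M_0(\mathbb D)$ for $t$ in a suitable open neighborhood of $[0,1]$. The composition $h \mapsto \widetilde L(h) \mapsto \widetilde\Lambda\circ\widetilde L(h) \mapsto (1-t)\widetilde\Lambda\circ\widetilde L(h) \mapsto \pi(\cdots)$ is then a chain of real-analytic maps: the first two steps by Theorem~\ref{mains}, the third by bilinearity of scaling, and the last by the holomorphy (hence real-analyticity) of the Teichm\"uller projection $\pi: \mathcal M_0(\mathbb D) \to T_v$.

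For the alternative $\Phi'(h,t) = \widetilde L^{-1}((1-t)\widetilde L(h))$ the verification is cleaner because $\mbox{VMO}_{\mathbb R}(\mathbb S)$ is a linear space: $(1-t)\widetilde L(h)$ lies in it for every real $t$ without any size restriction. The endpoints give $\Phi'(h,0) = \widetilde L^{-1}\circ\widetilde L(h) = h$ and $\Phi'(h,1) = \widetilde L^{-1}(0) = [\mathrm{id}]$, and real-analyticity follows immediately from Corollary~\ref{mainss} combined with the bilinearity of scaling.

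The principal obstacle, such as it is, lies in justifying real-analyticity at the boundary values $t = 0$ and $t = 1$ for $\Phi$; this is resolved by the open-neighborhood extension above, which depends only on the openness of the Beltrami coefficient ball $\{\mu \in M(\mathbb D) : \|\mu\|_\infty < 1\}$ and the homogeneity of the Carleson condition. The formula $\Phi'$ bypasses the issue entirely, since its intermediate space is already linear.
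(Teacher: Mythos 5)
Your argument is correct and follows the same route the paper intends: the corollary is stated there as an immediate consequence of Theorem \ref{mains} and Corollary \ref{mainss}, namely composing the real-analytic maps $\widetilde L$, $\widetilde\Lambda$, the scaling $(\mu,t)\mapsto(1-t)\mu$, and the holomorphic projection $\pi$ (respectively $\widetilde L^{-1}$ for $\Phi'$). Your additional checks — that $(1-t)\mu$ stays in $\mathcal M_0(\mathbb D)$, the endpoint values, and the extension of the parameter to an open interval around $[0,1]$ — are exactly the routine verifications the paper leaves implicit.
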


Fan and Hu \cite{FH} proved that the image domain of $T_v$ under the Bers embedding is holomorphically contractible 
in the sense that there is a contraction $\Phi:T_v\times [0, 1] \to T_v$ such that $\Phi$ is continuous and
$\Phi(\cdot, t)$ is holomorphic for each fixed $t$.

We remark that the corresponding claim to Theorem \ref{mains} for the BMO Teichm\"uller space $T_b=\pi(\mathcal M(\mathbb D))$
is open, where $\mathcal M(\mathbb D)$ is the set of Beltrami coefficients $\mu$ on $\mathbb D$ such that 
$\lambda_\mu \in {\rm CM}(\mathbb D)$. The crucial difference stems from the fact that
$L_{\mathbb R}^\infty(\mathbb R)$ is dense in ${\rm VMO}_{\mathbb R}(\mathbb S)$ in the BMO-norm (see Section 6).
Because of this property, the estimates performed in Section 5 will be available.

We end this introduction section with noticing the organization of this paper. 
In Section 2, we recall definitions and properties of the BMO and VMO spaces, Muckenhoupt $A_{\infty}$-weights, and strongly quasisymmetric homeomorphisms, which will be frequently used in the rest of the paper. In Section 3, we prove the first statement of Theorem \ref{mains} (i.e. Corollary \ref{holo2}). 
In Section 4, we review the variant of Beurling--Ahlfors extension by heat kernel, and in Section 5,
we show the local boundedness of the complex dilatations made by such extensions of strongly symmetric homeomorphisms.
Section 6 is devoted to the proof of the second statement of Theorem \ref{mains} (i.e. Theorem \ref{realana}).

\section{Preliminaries on BMO, VMO, and $A_\infty$-weights}
We treat the cases of $\mathbb R$ and $\mathbb S$ simultaneously;
$X$ denotes either of $\mathbb R$ or $\mathbb S$. 
Let $I$ be an interval in $X$. 
A locally integrable complex-valued function $u$ on $I$ is of {\it BMO} if
$$
\Vert u \Vert_{{\rm BMO}(I)}=\sup_{J \subset I}\frac{1}{|J|} \int_J |u(x)-u_J| |dx| <\infty,
$$
where the supremum is taken over all bounded intervals $J$ on $I$ and $u_J$ denotes the integral mean of $u$
over $J$. The set of all BMO functions on $I$ is denoted by ${\rm BMO}(I)$.  This is regarded as a Banach space with norm 
$\Vert \cdot \Vert_{{\rm BMO}(I)}$ modulo constants.  
Hereafter, $\Vert \cdot \Vert_{{\rm BMO}(X)}$ will be simplified to be $\Vert \cdot \Vert_{*}$. 
It is said that $u \in {\rm BMO}(I)$ is of {\it VMO} if
$$ 
\lim_{|J| \to 0}\frac{1}{|J|} \int_J |u(x)-u_J| |dx|=0,
$$
and the set of all such functions is denoted by ${\rm VMO}(I)$.
This is a closed subspace of ${\rm BMO}(I)$.  The real subspaces of ${\rm BMO}(I)$ and ${\rm VMO}(I)$ consisting of all real-valued BMO functions and VMO functions on  $I$ are denoted by ${\rm BMO}_{\mathbb R}(I)$ and ${\rm VMO}_{\mathbb R}(I)$, respectively. The {\it John--Nirenberg inequality} for BMO functions (see \cite[VI.2]{Ga}, \cite[IV.1.3]{St2}) asserts that
there exists two universal positive constants $C_0$ and $C_{JN}$ such that for any complex-valued BMO function $u$, 
any bounded interval $J$ of $I$, and any $\lambda > 0$, it holds that
\begin{equation}\label{JN}
\frac{1}{|J|} |\{t \in J: |u(t) - u_J| \geq \lambda \}| \leq C_0 \exp\left(\frac{-C_{JN}\lambda}{\Vert u \Vert_{{\rm BMO}(I)}} \right).
\end{equation}

A locally integrable non-negative measurable function $\omega \geq 0$ on $X$ 
is called a {\it weight}. We say that $\omega$ is
a {\it Muckenhoupt  $A_\infty$-weight} (or $A_{\infty}$-weight for simplicity)
if
there are positive constants $\alpha(\omega)$, $K(\omega)>0$ such that  
\begin{equation}\label{SD}
\frac{\int_E \omega(x)|dx|}{\int_I \omega(x)|dx|}\leq K(\omega)\left(\frac{|E|}{|I|}\right)^{\alpha(\omega)}
\end{equation}
for any bounded interval $I \subset X$ and 
for any measurable subset $E \subset I$ (see \cite[Theorem V]{CF}
and \cite[Lemma VI.6.11]{Ga}).
From this condition, we see that an $A_\infty$-weight $\omega$ satisfies the doubling property:
there exists a constant $\rho(\omega)>1$ such that
\begin{equation}\label{doubling}
\int_{2I} \omega(x)|dx| \leq \rho(\omega) \int_{I} \omega(x)|dx|
\end{equation}
holds for any bounded intervals $I$ and $2I$ with the same center and $|2I|=2|I|$.
The doubling constant $\rho(\omega)$ can be given in terms of $\alpha(\omega)$ and $K(\omega)$.

The Jensen inequality implies that
\begin{equation}\label{Jensen}
\exp \left(\frac{1}{|I|} \int_I \log \omega(x) |dx| \right) 
\leq \frac{1}{|I|} \int_I \omega(x) |dx|.
\end{equation}
Another characterization of $A_\infty$-weights can be given by 
the inverse Jensen inequality. 
Namely,
$\omega \geq 0$
belongs to the class of $A_\infty$-weights 
if and only if there exists a constant $C_\infty(\omega) \geq 1$ such that
\begin{equation}\label{iff}
\frac{1}{|I|} \int_I \omega(x) |dx| \leq C_\infty(\omega) \exp \left(\frac{1}{|I|} \int_I \log \omega(x) |dx| \right) 
\end{equation}
for every bounded interval $I \subset X$ (see \cite{Hr}). 

We call the least possible value of such $C_\infty(\omega)$
the $A_\infty$-constant for $\omega$. 
If $\omega$ is an $A_\infty$-weight,
the constants $\alpha(\omega)$ and $K(\omega)$ in (\ref{SD}) are estimated by $C_\infty(\omega)$ 
as is shown in \cite[Theorem 1]{Hr},
and conversely,
$C_\infty(\omega)$ is estimated by $\alpha(\omega)$ and $K(\omega)$
(see \cite[Section 3]{CF}).
One can also refer to \cite[p.218]{St2} for these implications.

By (\ref{iff}) and the Jensen inequality, we see that if $\omega$ is an $A_\infty$-weight on $X$, then
$\log \omega$ belongs to ${\rm BMO}_{\mathbb R}(X)$, and  
conversely, 
if $\Vert \log \omega \Vert_*$
is sufficiently small, then
$\omega$ is an $A_\infty$-weight (see \cite[Lemma VI.6.5]{Ga}, \cite[p.409]{GR}).

A sense-preserving homeomorphism $h$ of $X$ is called {\it strongly quasisymmetric} 
if $h$ is locally absolutely continuous such that $|h'|$ is an $A_\infty$-weight.
In particular, $\log h' \in {\rm BMO}(X)$.    
A strongly quasisymmetric homeomorphism is quasisymmetric. We see from (\ref{SD}) that the
set of all strongly quasisymmetric homeomorphisms of $X$ forms a group. 
Moreover, Jones \cite{Jo} in 1983 characterized a strongly quasisymmetric homeomorphism by the pull-back operator defined on 
the BMO space (see \cite[Lemma]{ABL88} for the case of the VMO space): 

\begin{proposition}\label{pullback}
A sense-preserving homeomorphism $h$ of $X$ is strongly quasisymmetric if and only if 
the pull-back operator $P_h: u \mapsto u\circ h$ gives an isomorphism of ${\rm BMO}(X)$ onto itself,
that is, $P_h$ and $(P_h)^{-1}$ are bounded linear operators. Moreover, in the case of $X=\mathbb S$,
$P_h$ maps ${\rm VMO}(\mathbb S)$ onto ${\rm VMO}(\mathbb S)$ if $h$ is strongly quasisymmetric defined on $\mathbb S$. 
\end{proposition}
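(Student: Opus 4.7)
The plan is to establish the forward direction via a change-of-variables argument combined with the $A_\infty$ hypothesis, handle the converse by testing $P_h$ on carefully chosen functions following Jones, and deduce the VMO statement by density.

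First, suppose $h$ is strongly quasisymmetric. For $u \in {\rm BMO}(X)$ and any bounded interval $J \subset X$, I would set $J' = h(J)$ and substitute $y = h(x)$; this transforms both $|J|$ and $\int_J |u\circ h - c|\,|dx|$ into integrals against the weight $\omega(y) = |(h^{-1})'(y)| = 1/|h'(h^{-1}(y))|$ over $J'$. Since the strongly quasisymmetric homeomorphisms form a group (as noted above), $h^{-1}$ is strongly quasisymmetric, so $\omega$ is itself an $A_\infty$-weight on $X$. Taking $c = u_{J'}$, the layer-cake formula rewrites the weighted mean oscillation as
$$\frac{1}{\omega(J')}\int_0^\infty \omega(\{y\in J':|u(y)-c|>\lambda\})\,d\lambda.$$
The inequality (\ref{SD}) applied to $\omega$ dominates the $\omega$-measure of each level set by a power of its Lebesgue measure, and the John--Nirenberg inequality (\ref{JN}) supplies exponential decay of that Lebesgue measure in $\lambda/\Vert u\Vert_*$. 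Integration in $\lambda$ then yields $\Vert P_h u\Vert_* \leq C \Vert u\Vert_*$ with $C$ depending only on $\alpha(\omega)$, $K(\omega)$, and $C_{JN}$.

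For the converse, I would assume $P_h$ is a bounded linear isomorphism of ${\rm BMO}(X)$ and follow the scheme of Jones \cite{Jo}. The first task is to show that $h$ is locally absolutely continuous by testing $P_h$ and $P_h^{-1}$ against BMO representatives of indicator-type functions and using that BMO is insensitive to Lebesgue null sets. Once $h'$ exists almost everywhere, one iterates the operator-norm bounds of $P_h$ and $P_h^{-1}$ on truncated logarithmic test functions, converting them into quantitative comparisons between $|E|/|I|$ and $|h(E)|/|h(I)|$ for measurable $E\subset I$; this is precisely the $A_\infty$ inequality (\ref{SD}) for $|h'|$. I expect this converse to be the main obstacle, as it requires simultaneously extracting pointwise regularity of $h$ and a global distributional inequality for $|h'|$ from purely functional-analytic information, and I would invoke Jones's iteration rather than attempt a fresh construction.

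Finally, the VMO assertion on $\mathbb S$ follows by density. Since continuous functions are dense in ${\rm VMO}(\mathbb S)$ with respect to $\Vert\cdot\Vert_*$, for $u\in{\rm VMO}(\mathbb S)$ I would choose continuous $u_n$ with $\Vert u-u_n\Vert_*\to 0$. Each $P_h u_n=u_n\circ h$ is continuous and therefore belongs to ${\rm VMO}(\mathbb S)$, while the BMO-boundedness already established gives $P_h u_n\to P_h u$ in $\Vert\cdot\Vert_*$. Since ${\rm VMO}(\mathbb S)$ is a closed subspace of ${\rm BMO}(\mathbb S)$, this forces $P_h u \in {\rm VMO}(\mathbb S)$.
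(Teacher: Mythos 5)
The paper does not actually prove this proposition: it is quoted as a known result, with the BMO equivalence attributed to Jones \cite{Jo} and the VMO statement on $\mathbb S$ to \cite[Lemma]{ABL88}. Your proposal is therefore more detailed than the paper's treatment, and the parts you prove yourself are correct. The forward direction via the substitution $y=h(x)$, the weight $\omega=|(h^{-1})'|$ (an $A_\infty$-weight because the strongly quasisymmetric maps form a group, as the paper notes after (\ref{SD})), the layer-cake formula, (\ref{SD}) and the John--Nirenberg inequality (\ref{JN}) is the standard argument and goes through; note that the change of variables itself also uses the absolute continuity of $h^{-1}$, which is exactly what the group property supplies, and that boundedness of $(P_h)^{-1}=P_{h^{-1}}$ is obtained by running the same estimate for $h^{-1}$. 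For the hard converse you explicitly defer to Jones's construction, which is precisely the position the paper takes, so nothing is lost there. Your density proof of the VMO assertion is a genuine alternative to the citation of \cite{ABL88}: it is correct, and it correctly isolates where compactness of $\mathbb S$ enters (continuous functions are dense in ${\rm VMO}(\mathbb S)$ in the BMO norm, and composition with a circle homeomorphism preserves continuity), consistent with the paper's remark that the analogue fails on $\mathbb R$ unless $h$ is uniformly continuous. One small completion is needed: as written your density argument only yields $P_h({\rm VMO}(\mathbb S))\subset{\rm VMO}(\mathbb S)$, whereas the proposition asserts that $P_h$ maps ${\rm VMO}(\mathbb S)$ onto itself; surjectivity follows by applying the same argument to $h^{-1}$ and using $P_h\circ P_{h^{-1}}={\rm id}$.
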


It was noticed in our recent paper \cite{WM-5} that the pull-back operator $P_h$ does not necessarily maps ${\rm VMO}(\mathbb R)$ into itself if $h$ is strongly quasisymmetric defined on $\mathbb R$. However, under an extra assumption that
$h$ is uniformly continuous, the operator $P_h$ preserves ${\rm VMO}(\mathbb R)$.

\section{Real-analytic mapping to the space of VMO (Theorem \ref{mains} {\rm (i)})} 

In this section, we will prove the first part of Theorem \ref{mains} whose precise statement will be given in Corollary \ref{holo2}
to Theorem \ref{holo}. 

Let $M(\mathbb D)$ denote the open unit ball of the Banach space $L^{\infty}(\mathbb D)$
of all essentially bounded measurable functions on $\mathbb D$. An element in $M(\mathbb D)$ is called a
Beltrami coefficient.  For $\mu \in L^\infty(\mathbb D)$, we set
$\lambda_\mu = |\mu(z)|^2/(1 - |z|^2) dxdy$ and define
$\Vert \mu \Vert_c =\Vert \lambda_\mu \Vert_c^{1/2}$.
Here, $\Vert \lambda_\mu \Vert_c$ is the Carleson norm of $\lambda_{\mu}$ defined in \eqref{CM}.  
Then, we introduce a new norm 
$\Vert \mu \Vert_{\infty} + \Vert \mu \Vert_{c}$ 
for $\mu$.  
Let $\mathcal{L}(\mathbb D)$ denote the Banach space consisting of all elements $\mu \in L^{\infty}(\mathbb D)$ with norm
$$
\Vert \mu \Vert_{\infty}+\Vert \mu \Vert_{c} < \infty.
$$
Let $\mathcal L_0(\mathbb D)$ denote the subspace of $\mathcal{L}(\mathbb D)$ consisting of all $\mu$
such that $\lambda_\mu$ is a vanishing Carleson measure. 
This is a Banach subspace of $\mathcal{L}(\mathbb D)$. 
Moreover, we define the corresponding spaces of Beltrami coefficients
as $\mathcal{M}(\mathbb D) =  M(\mathbb D) \cap \mathcal{L}(\mathbb D)$ and
$\mathcal{M}_0(\mathbb D) =  M(\mathbb D) \cap \mathcal{L}_0(\mathbb D)$. 
The relevant spaces on $\mathbb D^*$ and $\mathbb U$ can be defined by replacing the hyperbolic density $1/(1 - |z|^2)$ on $\mathbb D$ with that on $\mathbb D^*$ and $\mathbb U$. 

It is well known that a quasiconformal homeomorphism of $\mathbb D$ onto itself induces a biholomorphic automorphism of the universal Teichm\"uller space $T$. Precisely, let $f_{\mu}: \mathbb D \to \mathbb D$ be a quasiconformal homeomorphism 
with complex dilatation $\mu \in M(\mathbb D)$. Then, $f_{\mu}$ induces a biholomorphic automorphism 
$r_{\mu}: M(\mathbb D) \to M(\mathbb D)$ defined as
$$
r_{\mu}(\nu) = \left(\frac{\nu - \mu}{1 - \bar{\mu}\nu} \frac{\partial f_{\mu}}{\overline{\partial f_{\mu}}} \right)\circ f_{\mu}^{-1}, 
$$
which is the complex dilatation of $f_{\nu} \circ f_{\mu}^{-1}$. 
Moreover,
$r_{\mu}$ descends down by the Teichm\"uller projection $\pi$
to a biholomorphic automorphism $R_{[\mu]}: T \to T$ satisfying $R_{[\mu]}\circ\pi = \pi\circ r_{\mu}$  
(see \cite[Section V.5.4]{Le}, \cite[Section 3.6.2]{Na}). 

The corresponding result for $T_v(\mathbb D)=\pi(\mathcal{M}_0(\mathbb D))$ (and for $T_b(\mathbb D)=\pi(\mathcal{M}(\mathbb D))$)
was asserted  in \cite[Remark 5.1]{SW}, which will be used in the proof of Proposition \ref{curve} and Theorem \ref{holo}. 
\begin{proposition}\label{right}
Suppose $f_{\mu}$ is bi-Lipschitz under the hyperbolic metric with complex dilatation $\mu \in \mathcal M_0(\mathbb D)$. Then, 
 $r_{\mu}: \mathcal M_0(\mathbb D) \to \mathcal M_0(\mathbb D)$ is a biholomorphic automorphism of $\mathcal M_0(\mathbb D)$, and moreover $R_{[\mu]}: T_v \to T_v$ is a biholomorphic automorphism of $T_v=T_v(\mathbb D)$. 
\end{proposition}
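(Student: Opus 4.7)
The plan is to establish two self-contained facts and then piece them together with the already-known theory: (a) $r_\mu$ maps $\mathcal M_0(\mathbb D)$ into itself and is holomorphic, and (b) its inverse is an $r_{\mu^\ast}$ of the same type, so that passing to the quotient by $\pi$ produces a biholomorphic automorphism of $T_v$. The only place the hypotheses are really used is in transferring the vanishing-Carleson condition across the change of variables induced by $f_\mu$, so I would front-load that step.

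First, I would fix $\nu\in\mathcal M_0(\mathbb D)$ and read off from the formula
\[
|r_\mu(\nu)(z)|^2=\frac{|\nu(w)-\mu(w)|^2}{|1-\overline{\mu(w)}\nu(w)|^2},\qquad w=f_\mu^{-1}(z),
\]
that $|r_\mu(\nu)(z)|^2\lesssim |\mu(w)|^2+|\nu(w)|^2$ since the denominator is bounded below by $1-\|\mu\|_\infty\|\nu\|_\infty>0$. Using the change of variables $z=f_\mu(w)$ together with the quasiconformal identity $J_{f_\mu}=(1-|\mu|^2)|\partial f_\mu|^2$, a Carleson sector $S\subset\mathbb D$ produces
\[
\int_S\frac{|r_\mu(\nu)(z)|^2}{1-|z|^2}\,dA(z)\lesssim\int_{f_\mu^{-1}(S)}\bigl(|\mu(w)|^2+|\nu(w)|^2\bigr)\frac{J_{f_\mu}(w)}{1-|f_\mu(w)|^2}\,dA(w).
\]
The hyperbolic bi-Lipschitz property of $f_\mu$ now does two jobs: it makes $J_{f_\mu}(w)/(1-|f_\mu(w)|^2)$ pointwise comparable to $1/(1-|w|^2)$, and it guarantees that $f_\mu^{-1}(S)$ is covered by a bounded number of Carleson sectors whose scales are comparable to the scale of $S$. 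Combining these two facts reduces the Carleson norm of $\lambda_{r_\mu(\nu)}$ to a fixed multiple of the Carleson norms of $\lambda_\mu$ and $\lambda_\nu$, and the same pointwise comparison yields the vanishing condition as $|S|\to 0$. This shows $r_\mu:\mathcal M_0(\mathbb D)\to\mathcal M_0(\mathbb D)$ and simultaneously bounds its Lipschitz constant on bounded subsets.

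Holomorphy of $r_\mu$ on $\mathcal L_0(\mathbb D)$ then comes free: the pointwise dependence of $r_\mu(\nu)$ on $\nu$ is holomorphic, it is already known to be holomorphic in the $L^\infty$-norm from the universal-Teichm\"uller theory, and the uniform estimate just proved upgrades this to holomorphy with respect to the finer norm $\|\cdot\|_\infty+\|\cdot\|_c$. For invertibility I would write down $\mu^\ast$, the complex dilatation of $f_\mu^{-1}$, and note that $f_\mu^{-1}$ is again hyperbolic bi-Lipschitz (as the inverse of such a map) with complex dilatation again satisfying the vanishing-Carleson condition by the symmetric version of the change-of-variables argument above. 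Then $r_{\mu^\ast}$ is a self-map of $\mathcal M_0(\mathbb D)$ of the same type and is the two-sided inverse of $r_\mu$ on $\mathcal L_0(\mathbb D)$, so $r_\mu$ is a biholomorphic automorphism.

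Finally, to descend to $T_v$ I would use that on all of $M(\mathbb D)$ one already has the biholomorphic automorphism $R_{[\mu]}:T\to T$ characterized by $R_{[\mu]}\circ\pi=\pi\circ r_\mu$. Because $r_\mu$ preserves $\mathcal M_0(\mathbb D)$ and $T_v=\pi(\mathcal M_0(\mathbb D))$, the restriction of $R_{[\mu]}$ to $T_v$ is a self-bijection; holomorphy and holomorphy of the inverse follow by composing with the local holomorphic right inverses of $\pi:\mathcal M_0(\mathbb D)\to T_v$ provided by \cite[Theorem 5.1]{SW}. The main obstacle in the whole scheme is the first step: making precise the claim that hyperbolic bi-Lipschitz maps distort Carleson sectors only by bounded factors, since this is what lets Carleson and vanishing-Carleson norms be transferred in both directions; once that change-of-variables lemma is in hand, the rest is formal manipulation together with the known theory on $T$.
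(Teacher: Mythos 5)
Your overall architecture (key Carleson-norm estimate for $r_\mu$, local boundedness, upgrade of pointwise holomorphy, symmetric treatment of the inverse via $\mu^*=r_\mu(0)$, descent to $T_v$ by the local holomorphic sections of $\pi$) is the same as the paper's, but the step you yourself flag as the main obstacle is where the proposal genuinely breaks down, and the two geometric facts you invoke there are false as stated. What hyperbolic bi-Lipschitzness gives pointwise is preservation of the hyperbolic \emph{area} element, i.e. $J_{f_\mu}(w)\asymp\bigl((1-|f_\mu(w)|^2)/(1-|w|^2)\bigr)^2$, hence $J_{f_\mu}(w)/(1-|f_\mu(w)|^2)\asymp (1-|f_\mu(w)|^2)/(1-|w|^2)^2$. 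This is comparable to $1/(1-|w|^2)$ only if $1-|f_\mu(w)|^2\asymp 1-|w|^2$, and a hyperbolically bi-Lipschitz quasiconformal self-map distorts the Euclidean distance to the boundary in a power-law (quasisymmetric) way, not by bounded factors: for instance the map of $\mathbb U$ given in polar coordinates by $(r,\theta)\mapsto(r^K,\theta)$ is quasiconformal and bi-Lipschitz for the hyperbolic metric, yet multiplies the boundary distance by $r^{K-1}$, which is unbounded. For the same reason $f_\mu^{-1}(S_{h,\theta_0})$ is in general only trapped in sectors of scale comparable to a power $h^{1/K}$ of $h$, not to $h$ itself, so the ``bounded number of comparably-scaled sectors'' covering argument loses an unbounded factor of order $h^{1/K-1}$. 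Consequently your reduction of $\Vert\lambda_{r_\mu(\nu)}\Vert_c$ to $\Vert\lambda_\mu\Vert_c+\Vert\lambda_\nu\Vert_c$ does not follow: after the change of variables there remains the unbounded weight $(1-|f_\mu(w)|^2)/(1-|w|^2)$, and controlling it requires playing the decay of $1-|f_\mu(w)|$ inside $f_\mu^{-1}(S)$ against the Carleson structure of $\lambda_{\nu-\mu}$. This is precisely the content of the estimate $\Vert r_\mu(\nu)\Vert_c\le C(\Vert\mu\Vert_\infty)\Vert\nu-\mu\Vert_c$, which the paper does not reprove but quotes from \cite[Remark 5.1]{SW}, resting on the proof of \cite[Lemma 10]{CZ}; your proposal in effect treats that lemma as an easy covering argument, and it is not.

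Two smaller points. First, the claim that holomorphy in the finer norm ``comes free'' from holomorphy in $L^\infty$ plus the uniform bound is not automatic; one needs an argument of the type of Lemma \ref{weakholo} (pointwise holomorphy in the complex parameter for a.e.\ $z$, local boundedness, and the integral-type property of $\Vert\cdot\Vert_\infty+\Vert\cdot\Vert_c$, yielding G\^ateaux and then Fr\'echet holomorphy) — this is exactly the route the paper's Remark indicates, and your data would feed into it, so this part is reparable. Second, with the crude pointwise bound $|r_\mu(\nu)|^2\lesssim|\mu|^2+|\nu|^2$ you would at best get boundedness, not the Lipschitz-type bound in $\Vert\nu-\mu\Vert_c$ that the cited estimate provides; keep the numerator $|\nu-\mu|$ intact. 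The descent to $T_v$ and the treatment of the inverse are fine, but the proof stands or falls with the Carleson-norm transfer estimate, and there the proposal has a genuine gap.
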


\begin{remark}
We have that an inequality $\Vert r_{\mu}(\nu)\Vert_c \leq C(\Vert \mu \Vert_{\infty}) \Vert \nu - \mu\Vert_c$ holds, 
which was observed in \cite[Remark 5.1]{SW} by examining the proof of \cite[Lemma 10]{CZ} (see also \cite{WM-5}). This 
implies that the map $r_{\mu}: \mathcal M_0(\mathbb D) \to \mathcal M_0(\mathbb D)$ is locally bounded. 
Following Remark after Theorem \ref{Linfty}, the holomorphy of $r_{\mu}$ can be verified. 
The inverse $r_{\mu}^{-1}$ is treated similarly.
These arguments show that the map $r_{\mu}: \mathcal M_0(\mathbb D) \to \mathcal M_0(\mathbb D)$ is biholomorphic. 
The local inverse of the Bers projection, which is equivalent to the Teichm\"uller projection through the Bers embedding,
constructed in \cite[Theorem 5.1]{SW} is holomorphic for the same reason.
\end{remark}

A sense-preserving homeomorphism $h$ of $\mathbb S$ onto itself  is called {\it strongly symmetric} if $h$ is absolutely continuous with $\log h' \in {\rm VMO}(\mathbb S)$. Then, $h$ must be strongly quasisymmetric since the condition $\log h' \in {\rm VMO}(\mathbb S)$ implies $|h'| \in A_{\infty}$ by the John--Nirenberg inequality \eqref{JN}
(see \cite[p.474]{GR}).  
Let ${\rm SS} (\mathbb S)$ be the set of all strongly symmetric homeomorphisms $h$ of $\mathbb S$ onto itself which satisfy normalized conditions 
\begin{equation}\label{2conditions}
h(1) = 1 \quad {\rm and} \quad \int_{-\pi}^{\pi}h(e^{it})dt = 0. 
\end{equation}
Here, we use these normalized conditions different from  the traditional case of the universal Teichm\"uller space for later consideration.  
By $(V_0)\Rightarrow (V_1)$ in Section 1, any $h \in {\rm SS} (\mathbb S)$ can be extended to a quasiconformal mapping of $\mathbb D$ onto itself such that its complex dilatation belongs to $\mathcal M_0(\mathbb D)$. The Douady--Earle extension $E(h)$ is such an extension, and moreover, it is a bi-Lipschitz diffeomorphism under the hyperbolic metric (see \cite{CZ, TWS}). The second condition in \eqref{2conditions} implies $E(h)(0) = 0$, and $E(h)(\infty) = \infty$ by reflection across $\mathbb S$.

We say that a quasiconformal homeomorphism $G$ of the whole plane $\mathbb C$ 
onto itself is a {\it VMO-quasiconformal mapping relative to $\mathbb S$} if 
$\mu_{G|_{\mathbb D}} \in \mathcal M_0(\mathbb D)$ and $\mu_{G|_{\mathbb D^*}} \in \mathcal M_0(\mathbb D^*)$.  Let $\gamma = G|_{\mathbb S}$ and $\Gamma = \gamma(\mathbb S)$. Then, the homeomorphism $\gamma: \mathbb S \to \Gamma$ is a parametrization of the curve $\Gamma$, and we call $\gamma$ a {\it strongly symmetric embedding}.  For $\mu_1 \in \mathcal M_0(\mathbb D)$ and $\mu_2 \in \mathcal M_0(\mathbb D^*)$, we denote by $G = G(\mu_1, \mu_2)$ the normalized VMO-quasiconformal mapping $G$ relative to
$\mathbb S$ $(G(0) = 0, G(1) = 1, G(\infty) = \infty)$ with $\mu_{G|_{\mathbb D}} = \mu_1$, $\mu_{G|_{\mathbb D^*}} = \mu_2$. 
Let ${\rm SS}_{\mathbb C}(\mathbb S)$ be the set of all normalized strongly symmetric embeddings of $\mathbb S$. 

\begin{proposition}\label{curve}
A normalized strongly symmetric embedding 
$\gamma:\mathbb S \to \mathbb C$ is absolutely continuous and $\log \gamma'$
belongs to ${\rm VMO}(\mathbb S)$. Moreover, the image $\Gamma = \gamma(\mathbb S)$ is a bounded asymptotically smooth curve.  
\end{proposition}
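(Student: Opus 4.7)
The plan is to factor $G=G(\mu_1,\mu_2)$ through the Riemann maps of the complementary domains $\Omega=G(\mathbb{D})$ and $\Omega^*=G(\mathbb{D}^*)$, and then assemble the known equivalences $(V_0)$--$(V_3)$ together with the pull-back characterization of Proposition \ref{pullback}. Writing $G|_{\mathbb{D}}=F\circ f_1$ and $G|_{\mathbb{D}^*}=g\circ f_2$, with $F\colon\mathbb{D}\to\Omega$ and $g\colon\mathbb{D}^*\to\Omega^*$ conformal (normalized so that $g$ fixes $\infty$), the conformality of $F$ and $g$ forces $\mu_{f_1}=\mu_1\in\mathcal{M}_0(\mathbb{D})$ and $\mu_{f_2}=\mu_2\in\mathcal{M}_0(\mathbb{D}^*)$. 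Applying $(V_0)\Leftrightarrow(V_1)$ on each side of $\mathbb{S}$, both $f_1|_{\mathbb{S}}$ and $f_2|_{\mathbb{S}}$ are strongly symmetric self-homeomorphisms of $\mathbb{S}$.

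I would first establish that $\Gamma$ is a bounded asymptotically smooth curve. Boundedness is immediate from compactness of $\mathbb{S}$. From $F\circ f_1|_{\mathbb{S}}=G|_{\mathbb{S}}=g\circ f_2|_{\mathbb{S}}$, the conformal welding homeomorphism is
$$
g|_{\mathbb{S}}^{-1}\circ F|_{\mathbb{S}}=f_2|_{\mathbb{S}}\circ(f_1|_{\mathbb{S}})^{-1},
$$
a composition of two strongly symmetric maps of $\mathbb{S}$. Since ${\rm SS}(\mathbb{S})$ is closed under composition and inversion (which follows from Proposition \ref{pullback} combined with the chain rule for $\log h'$), this welding homeomorphism itself lies in ${\rm SS}(\mathbb{S})$. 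Then $(V_0)\Leftrightarrow(V_2)$ identifies $\Gamma$ with an asymptotically smooth curve, and $(V_2)\Leftrightarrow(V_3)$ upgrades this to $\log g'\in{\rm VMOA}(\mathbb{D}^*)$, whose non-tangential boundary value $\log g'|_{\mathbb{S}}$ therefore lies in ${\rm VMO}(\mathbb{S})$.

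The remaining claim on $\gamma$ then reduces to a chain-rule identity on $\mathbb{S}$: from $\gamma=g|_{\mathbb{S}}\circ f_2|_{\mathbb{S}}$,
$$
\log\gamma'=P_{f_2}(\log g'|_{\mathbb{S}})+\log f_2'|_{\mathbb{S}}.
$$
The second summand lies in ${\rm VMO}(\mathbb{S})$ because $f_2|_{\mathbb{S}}\in{\rm SS}(\mathbb{S})$. For the first, strong symmetry implies strong quasisymmetry, so Proposition \ref{pullback} ensures that $P_{f_2}$ preserves ${\rm VMO}(\mathbb{S})$ and therefore sends $\log g'|_{\mathbb{S}}$ back into ${\rm VMO}(\mathbb{S})$. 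Absolute continuity of $\gamma$ follows from the absolute continuity of the two factors together with the $A_\infty$-property of $|f_2'|$ on $\mathbb{S}$, which prevents the creation of exceptional null sets under $f_2|_{\mathbb{S}}$.

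The main delicate step I foresee is the passage from the one-sided data $\mu_1\in\mathcal{M}_0(\mathbb{D})$ and $\mu_2\in\mathcal{M}_0(\mathbb{D}^*)$ to the global asymptotic smoothness of $\Gamma$ via the welding homeomorphism: one must verify that the welding produced by the two Ahlfors--Bers factorizations above agrees, up to composition with M\"obius normalizations, with the welding implicit in the characterizations $(V_0)\Leftrightarrow(V_2)\Leftrightarrow(V_3)$, so that the VMOA-norm of $\log g'$ is genuinely a conformal invariant of $\Gamma$ in this setting. Once that identification is in place, everything else is formal bookkeeping built on Proposition \ref{pullback} and the chain rule.
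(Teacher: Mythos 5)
Your route is genuinely different from the paper's: you factor each side through the Riemann maps and work with the welding homeomorphism $f_2|_{\mathbb S}\circ(f_1|_{\mathbb S})^{-1}$ together with the group property of strongly symmetric homeomorphisms, whereas the paper writes $G=H\circ F$ with $F$ quasiconformal preserving $\mathbb S$ (the reflected $\mu_2$) and $H$ conformal on $\mathbb D^*$, and uses Proposition \ref{right} (through the bi-Lipschitz Douady--Earle representative) to see that the dilatation $F_*\mu_1$ of $H|_{\mathbb D}$ is Teichm\"uller equivalent to an element of $\mathcal M_0(\mathbb D)$, so that $(V_1)\Rightarrow(V_2),(V_3)$ apply directly to $\Gamma=H(\mathbb S)$ without ever invoking the interior Riemann map or welding uniqueness. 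As written, though, your argument has gaps at exactly the points where the paper does its technical work. First, the step you yourself flag is left unresolved: strong symmetry of the welding of $\Gamma$ yields $(V_2)$ and $(V_3)$ for $\Gamma$ only after one knows that a quasicircle is determined by its welding up to a M\"obius transformation of $\widehat{\mathbb C}$, and that asymptotic smoothness and the VMOA property of the log-derivative survive the resulting M\"obius and disk-automorphism adjustments. This uniqueness-of-welding statement is standard and citable, but it must be invoked explicitly; your proposal acknowledges it as ``the main delicate step'' without supplying it.

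Second, and more substantively, the chain-rule identity $\log\gamma'=P_{f_2}(\log g'|_{\mathbb S})+\log f_2'|_{\mathbb S}$ presupposes that the boundary function $g|_{\mathbb S}$ is absolutely continuous and differentiable a.e., with derivative equal (up to the smooth factor from the circle parametrization) to the non-tangential limit of the analytic derivative $g'$. Membership of $\log g'$ in ${\rm VMOA}(\mathbb D^*)$ gives non-tangential boundary values in ${\rm VMO}(\mathbb S)$, but by itself says nothing about the derivative of the boundary parametrization; your justification of absolute continuity ``from the two factors'' takes the absolute continuity of the factor $g|_{\mathbb S}$ for granted. This is precisely what the paper proves, citing Zhu's Theorem 9.23 for the boundary values, Pommerenke's Proposition 4.7 for the existence of angular derivatives, and Pommerenke's Theorem 5.5 (using that the image domain is a quasidisk) to identify the angular derivative with the ordinary derivative of the boundary map almost everywhere. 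In your ordering of the argument you could instead appeal to the classical theory of conformal maps onto Jordan domains with rectifiable boundary, since asymptotic smoothness (established earlier in your argument) gives rectifiability; but some such input must be supplied, and without it the displayed identity, and hence the conclusion $\log\gamma'\in{\rm VMO}(\mathbb S)$ and the absolute continuity of $\gamma$, is not established.
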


\begin{proof}
We take a normalized VMO-quasiconformal mapping $G(\mu_1, \mu_2)$ relative to
$\mathbb S$
such that $G|_{\mathbb S}=\gamma$. We extend $\mu_2$ to $\mathbb D$ by the reflection
\begin{equation}\label{refl}
\mu_2(z) = \overline{\mu_2\left( \frac{1}{\bar z}\right)}\frac{z^2}{\bar{z}^2},  \qquad z \in \mathbb D, 
\end{equation}
and take the unique normalized quasiconformal homeomorphism $F:\mathbb C \to \mathbb C$ with complex dilatation
$\mu_2$ $(F(0) = 0, F(1) = 1, F(\infty) = \infty)$. By $F(\mathbb S) = \mathbb S$ and
$(V_1) \Rightarrow (V_0)$ in Section 1, we have $f=F|_{\mathbb S}$ is strongly symmetric, 
and in particular, $\log f'$ belongs to ${\rm VMO}(\mathbb S)$. 
Next, we take the unique normalized quasiconformal homeomorphism $H:\mathbb C \to \mathbb C$ 
$(H(0) = 0, H(1) = 1, H(\infty) = \infty)$ that is conformal on $\mathbb D^*$ and
whose complex dilatation on $\mathbb D$ is the push-forward $F_* \mu_1$ of $\mu_1$ by $F$. Namely, 
the complex dilatation of $H \circ F|_{\mathbb D}$ is $\mu_1$. Then,  we have $H \circ F=G$.

The complex dilatation $F_* \mu_1$ is Teichm\"uller equivalent to some $\nu \in \mathcal M_0(\mathbb D)$.
This can be seen from Proposition \ref{right} if we replace $F|_{\mathbb D}$ with the Douady--Earle extension of $f$ which is a bi-Lipschitz diffeomorphism  
under the hyperbolic metric and whose complex dilatation belongs to $\mathcal M_0(\mathbb D)$. 
Then, the curve $\Gamma$ is bounded asymptotically smooth by $(V_1) \Rightarrow (V_2)$ in Section 1. 
Since $\log (H|_{\mathbb D^*})' \in {\rm VMOA}(\mathbb D^*)$ by $(V_1) \Rightarrow (V_3)$,
it has the finite non-tangential limit 
almost everywhere on $\mathbb S$, and this boundary function belongs to ${\rm VMO}(\mathbb S)$
(see \cite[Theorem 9.23]{Zh}).
This also implies that $H|_{\mathbb D^*}$ has a finite angular derivative almost everywhere
on $\mathbb S$ by \cite[Proposition 4.7]{Pom}. Moreover, since $H(\mathbb D^*)$ is a quasidisk,
\cite[Theorem 5.5]{Pom} asserts that the angular derivative at $z \in \mathbb S$ coincides with
$$
h'(z)=\lim_{\mathbb S \ni \xi \to z} \frac{h(\xi)-h(z)}{\xi-z}
$$
for $h=H|_{\mathbb S}$.
This shows that the non-tangential limit of $(H|_{\mathbb D^*})'$ coincides with the ordinary derivative $h'$
almost everywhere on $\mathbb S$. By taking the logarithm, we have that
$\log h' $ belongs to
${\rm VMO}(\mathbb S)$. Hence, $|h'|$ is an $A_{\infty}$-weight, and in particular, $h$ is absolutely continuous on $\mathbb S$.

By $H \circ F=G$, we see that $\gamma=G|_{\mathbb S}$ is also absolutely continuous, 
and taking the derivative on $\mathbb S$,
we have 
$$
\log h' \circ f + \log f'=\log \gamma'.
$$
We have seen that $\log f' \in {\rm VMO}(\mathbb S)$.
Since $f$ is strongly symmetric and $\log h' \in {\rm VMO}(\mathbb S)$,
Proposition \ref{pullback} shows that $\log h' \circ f \in {\rm VMO}(\mathbb S)$.
Thus, we obtain that $\log \gamma' \in {\rm VMO}(\mathbb S)$.
This completes the proof.
\end{proof}

The {\it VMO-Teichm\"uller space} $T_v(\mathbb D)$ on $\mathbb D$ is
defined to be
the set of all Teichm\"uller equivalence classes $[\mu]$ for $\mu \in \mathcal M_0(\mathbb D)$.
The quotient map $\pi:\mathcal M_0(\mathbb D) \to T_v(\mathbb D)$ is called the Teichm\"uller projection.
The VMO-Teichm\"uller space $T_v(\mathbb D^*)$ on  $\mathbb D^*$ 
and related concepts are
defined similarly. 

We define a map
$$
\widetilde \iota:\mathcal M_0(\mathbb D) \times \mathcal M_0(\mathbb D^*) \to {\rm SS}_{\mathbb C}(\mathbb S)
$$
by $\widetilde \iota(\mu_1,\mu_2)=G(\mu_1,\mu_2)|_{\mathbb S}$.
Then, by the argument of simultaneous uniformization due to Bers,
we see the following fact.

\begin{proposition}\label{id}
The space ${\rm SS}_{\mathbb C}(\mathbb S)$  is identified with
$T_v(\mathbb D) \times T_v(\mathbb D^*)$. More precisely, $\widetilde \iota$ splits into a well-defined bijection
$$
\iota:T_v(\mathbb D) \times T_v(\mathbb D^*) \to {\rm SS}_{\mathbb C}(\mathbb S)
$$
by the product of the Teichm\"uller projections
$$
\widetilde \pi:\mathcal M_0(\mathbb D) \times \mathcal M_0(\mathbb D^*) \to T_v(\mathbb D) \times T_v(\mathbb D^*),
$$
such that $\widetilde \iota=\iota \circ \widetilde \pi$.
\end{proposition}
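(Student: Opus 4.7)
The approach is to carry out Bers' simultaneous uniformization argument in the VMO setting, relying on the characterizations $(V_0)$--$(V_3)$ from the introduction and on the regularity of $G(\mu_1,\mu_2)$ established in Proposition \ref{curve}. For any $(\mu_1,\mu_2) \in \mathcal M_0(\mathbb D) \times \mathcal M_0(\mathbb D^*)$, the patched Beltrami coefficient $\mu_1\chi_{\mathbb D} + \mu_2\chi_{\mathbb D^*}$ on $\mathbb C$ has a unique normalized quasiconformal solution $G=G(\mu_1,\mu_2)$, whose boundary restriction $\widetilde\iota(\mu_1,\mu_2)=G|_{\mathbb S}$ already lies in ${\rm SS}_{\mathbb C}(\mathbb S)$ by definition. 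The task is therefore to show that $\widetilde\iota$ descends through $\widetilde\pi$ to a bijection $\iota$.

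For the factorization step, I would decompose
$$
G|_{\mathbb D} = g_1 \circ f_{\mu_1}, \qquad G|_{\mathbb D^*} = g_2 \circ f_{\mu_2},
$$
where $f_{\mu_i}$ is the normalized quasiconformal self-homeomorphism of $\mathbb D$ or $\mathbb D^*$ with complex dilatation $\mu_i$, and $g_1$, $g_2$ are the induced conformal parametrizations of the two components of $\widehat{\mathbb C} \setminus G(\mathbb S)$. Matching the two expressions for $G|_{\mathbb S}$ gives the welding identity
$$
g_2^{-1}\circ g_1\big|_{\mathbb S} = f_{\mu_2}|_{\mathbb S} \circ (f_{\mu_1}|_{\mathbb S})^{-1},
$$
whose right-hand side depends only on the classes $[\mu_1]$ and $[\mu_2]$. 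By uniqueness of conformal welding modulo a M\"obius transformation of $\widehat{\mathbb C}$, the triple $(G(\mathbb S), g_1, g_2)$ and hence $G$ itself is determined by $([\mu_1],[\mu_2])$ up to post-composition by a M\"obius transformation; the normalization $G(0)=0$, $G(1)=1$, $G(\infty)=\infty$ removes this three-complex-parameter ambiguity, so that $G(\mu_1,\mu_2)|_{\mathbb S}$ depends only on $([\mu_1],[\mu_2])$ and yields a well-defined map $\iota$.

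Bijectivity follows by reversing this chain of reasoning. For injectivity, the equality $\iota([\mu_1],[\mu_2])=\iota([\mu_1'],[\mu_2'])$ forces the two quasicircles and the corresponding normalized conformal parametrizations to coincide, so that $f_{\mu_i}|_{\mathbb S}=f_{\mu_i'}|_{\mathbb S}$ and hence $[\mu_i]=[\mu_i']$ for $i=1,2$. Surjectivity is essentially built into the definition of ${\rm SS}_{\mathbb C}(\mathbb S)$: any such $\gamma$ is of the form $G|_{\mathbb S}$ for some normalized VMO-quasiconformal $G$, and $(\mu_{G|_{\mathbb D}}, \mu_{G|_{\mathbb D^*}})$ is then a preimage under $\widetilde\iota$. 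The main subtlety I anticipate is bookkeeping the several normalizations simultaneously, since $G$, $f_{\mu_1}$, and $f_{\mu_2}$ carry different natural normalizations, and one must verify that the M\"obius ambiguity in the welding is genuinely killed by fixing $G$ at $\{0,1,\infty\}$. No new analytic estimate beyond the VMO and asymptotically smooth regularity already supplied by $(V_0)$--$(V_3)$ and Proposition \ref{curve} should be required.
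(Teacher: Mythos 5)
Your framework is the right one, and it is in fact all the paper itself supplies: the text gives no argument beyond the phrase ``by the argument of simultaneous uniformization due to Bers,'' so the factorization $G|_{\mathbb D}=g_1\circ f_{\mu_1}$, $G|_{\mathbb D^*}=g_2\circ f_{\mu_2}$, the welding identity, and the uniqueness of conformal welding up to a M\"obius transformation (via conformal removability of quasicircles) are exactly the intended argument. Your injectivity step is also essentially correct, and it works precisely because the $f_{\mu_i}$ are normalized at \emph{three boundary points}: equality of the two boundary embeddings gives $f_{\mu_1'}|_{\mathbb S}=B\circ f_{\mu_1}|_{\mathbb S}$ for a conformal self-map $B$ of $\mathbb D$ fixing $1,-1,i$, whence $B=\mathrm{id}$, and similarly for $\mu_2$.

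The genuine gap is the step you dismiss as bookkeeping: the claim that the normalization $G(0)=0$, $G(1)=1$, $G(\infty)=\infty$ kills the M\"obius ambiguity $M$. From $G(\mu_1',\mu_2')|_{\mathbb S}=M\circ G(\mu_1,\mu_2)|_{\mathbb S}$ you can only conclude $M(1)=1$, because $1$ lies on $\mathbb S$; the conditions at $0$ and $\infty$ would force $f_{\mu_1}(0)=f_{\mu_1'}(0)$ and $f_{\mu_2}(\infty)=f_{\mu_2'}(\infty)$, and these interior values are not invariants of the Teichm\"uller classes. Concretely, let $\varphi$ be a quasiconformal self-map of $\mathbb D$ that is the identity on $\mathbb S$, has smooth compactly supported dilatation $\mu_1$ (so $\mu_1\in\mathcal M_0(\mathbb D)$), and satisfies $\varphi(0)=c\neq 0$. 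Then $[\mu_1]=[0]$, while $G(\mu_1,0)=A\circ\widetilde\varphi$, where $\widetilde\varphi$ equals $\varphi$ on $\mathbb D$ and the identity on $\mathbb D^*$ and $A(z)=(z-c)/(1-c)$; hence $G(\mu_1,0)|_{\mathbb S}=A|_{\mathbb S}\neq\mathrm{id}|_{\mathbb S}=G(0,0)|_{\mathbb S}$. So with the interior normalization taken literally, $\widetilde\iota$ is not constant on the fibers of $\widetilde\pi$, and your well-definedness argument cannot close as written. The ambiguity has to be removed by data visible on the boundary: either regard elements of ${\rm SS}_{\mathbb C}(\mathbb S)$ modulo post-composition with M\"obius transformations (equivalently, work in the Bers-embedding coordinates $(\mathcal S_{G|_{\mathbb D^*}},\mathcal S_{G|_{\mathbb D}})$, where post-composition by M\"obius maps disappears and which is how the identification is actually used later in the paper), or normalize the embedding $\gamma$ itself at three points of $\mathbb S$, e.g.\ $\gamma(1)=1$, $\gamma(-1)=-1$, $\gamma(i)=i$; with such a normalization your welding argument does yield a well-defined $\iota$, since a M\"obius transformation fixing three points is the identity.
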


Moreover, via the Bers embedding $\beta$ given by the Schwarzian derivative $\mathcal S_{G(\mu,0)|_{\mathbb D^*}}$, $T_v(\mathbb D)$ is embedded into the complex Banach space $\mathcal B_0(\mathbb D^*)$ with norm defined by
$\Vert \varphi \Vert_{\mathcal B} = \Vert \lambda_{\varphi} \Vert_{c}$ for $\lambda_\varphi$ as in \eqref{CMS}, 
and in the same way $T_v(\mathbb D^*)$ is embedded into the complex Banach space $\mathcal B_0(\mathbb D)$, which is defined similarly to $\mathcal B_0(\mathbb D^*)$:
\begin{align*}
T_v(\mathbb D) &\cong \beta (T_v(\mathbb D)) = \{ {\mathcal S}_{G(\mu,0)} \in \mathcal B_0(\mathbb D^*) \mid \mu \in \mathcal M_0(\mathbb D)\};\\
T_v(\mathbb D^*) &\cong \beta (T_v(\mathbb D^*)) = \{ {\mathcal S}_{G(0,\mu)} \in \mathcal B_0(\mathbb D) \mid \mu \in \mathcal M_0(\mathbb D^*)\}. 
\end{align*}
Thus, $T_v(\mathbb D)$ and $T_v(\mathbb D^*)$ are endowed with the complex Banach manifold structures, 
respectively (see \cite[Theorem 5.2]{SW}), and thus, 
$T_v(\mathbb D) \times T_v(\mathbb D^*)$ is endowed with the product complex Banach manifold structure. 
Hence, by the identification ${\rm SS}_{\mathbb C}(\mathbb S) \cong T_v(\mathbb D) \times T_v(\mathbb D^*)$ in
Proposition \ref{id}, we may regard ${\rm SS}_{\mathbb C}(\mathbb S)$ as a domain 
of $\mathcal B_0(\mathbb D^*) \times \mathcal B_0(\mathbb D)$. 

Let us recall some results on $\rm VMOA$ space (see \cite{Ga, Pom, Zh}). It is known that $\phi \in {\rm VMOA}(\mathbb D^*)$ if $\phi \in H^2(\mathbb D^*)$ and if the boundary values $b(\phi)$ of 
$\phi$ on $\mathbb S$ belongs to ${\rm VMO}(\mathbb S)$. Moreover, ${\rm VMOA}(\mathbb D^*)$ is a Banach space modulo constants,
and the norm defined by $\Vert \phi \Vert = \Vert b(\phi) \Vert_{*}$ is equivalent to the norm defined by the Carleson measure
$\lambda_\phi$ given by \eqref{CML}. 
Then, the boundary extension $b: {\rm VMOA}(\mathbb D^*) \to {\rm VMO}(\mathbb S)$ is regarded as a linear isometric operator. Let 
$$
\mathcal T_v(\mathbb D^*) = \{\mathcal L_{G(\mu, 0)} \in {\rm VMOA}(\mathbb D^*) \mid \mu \in \mathcal M_0(\mathbb D)\}. 
$$
By the normalization of the VMO-quasiconformal mapping $G(\mu, 0)$ relative to
$\mathbb S$, the map
$$
\alpha: \, \mathcal T_v(\mathbb D^*) \to \beta (T_v(\mathbb D))
$$
is bijective by sending $\mathcal L_{G(\mu, 0)}$ to $\mathcal S_{G(\mu, 0)}$. Moreover, 
from the arguments in \cite[Section 6]{SW}, we see that $\alpha$ is a biholomorphic homeomorphism.

By Proposition \ref{curve}, we can consider a map $L:{\rm SS}_{\mathbb C}(\mathbb S) \to {\rm VMO}(\mathbb S)$ defined 
by $L(\gamma)=\log \gamma'$. 
Then, with respect to the complex structure of ${\rm SS}_{\mathbb C}(\mathbb S)$ given as above,
we see the following:

\begin{theorem}\label{holo}
The map $L:{\rm SS}_{\mathbb C}(\mathbb S) \to {\rm VMO}(\mathbb S)$ is holomorphic.
\end{theorem}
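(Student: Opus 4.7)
The plan is to lift $L$ to the Beltrami coefficient level and prove holomorphy there. By Proposition \ref{id} and the definition of the complex structure on $T_v(\mathbb D)\times T_v(\mathbb D^*)$ via the Bers embeddings, together with the existence of local holomorphic sections of the Teichm\"uller projections (Remark after Proposition \ref{right}), it suffices to show that the lift
\[
\widetilde L:\mathcal M_0(\mathbb D)\times \mathcal M_0(\mathbb D^*)\to {\rm VMO}(\mathbb S),\qquad
\widetilde L(\mu_1,\mu_2)=\log G(\mu_1,\mu_2)'|_{\mathbb S},
\]
is holomorphic; composing with local holomorphic sections then yields the holomorphy of $L$.

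I would first establish separate holomorphy in each argument by exploiting the factorization $G(\mu_1,\mu_2)=H\circ F$ from the proof of Proposition \ref{curve}. On $\mathbb S$ one has $\gamma=h\circ f$ with $f=F|_{\mathbb S}$, whence
\[
\widetilde L(\mu_1,\mu_2)=(\log h')\circ f+\log f'.
\]
For $\mu_2$ fixed, $F$ and $f$ are fixed and $\log f'$ is a constant. The assignment $\mu_1\mapsto F_*\mu_1=r_{\tilde\mu_2}(\mu_1)$, where $\tilde\mu_2$ is the reflection of $\mu_2$ to $\mathbb D$, is holomorphic into $\mathcal M_0(\mathbb D)$ by the Remark after Proposition \ref{right}. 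Composing this with $\alpha^{-1}\circ\beta\circ\pi:\mathcal M_0(\mathbb D)\to \mathcal T_v(\mathbb D^*)\subset {\rm VMOA}(\mathbb D^*)$ (holomorphic as a composition of holomorphic maps), then with the isometric boundary extension $b:{\rm VMOA}(\mathbb D^*)\to {\rm VMO}(\mathbb S)$, and finally with the bounded linear operator $P_f$ on ${\rm VMO}(\mathbb S)$ (bounded by Proposition \ref{pullback} since $f$ is strongly quasisymmetric) yields the holomorphy of $\mu_1\mapsto(\log h')\circ f$. Using instead the symmetric decomposition $G=\widetilde H\circ\widetilde F$, where $\widetilde F$ preserves $\mathbb S$ with dilatation $\mu_1$ and its reflection and $\widetilde H|_{\mathbb D}$ is conformal with dilatation $\widetilde F_*\mu_2$ on $\mathbb D^*$, the parallel argument gives separate holomorphy in $\mu_2$ for fixed $\mu_1$.

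To promote separate holomorphy to joint holomorphy, I plan to verify the local boundedness of $\widetilde L$ in the ${\rm BMO}$-norm. The key ingredients are: the inequality $\|r_\mu(\nu)\|_c\le C(\|\mu\|_\infty)\|\nu-\mu\|_c$ from the Remark after Proposition \ref{right}; continuity of the Bers embedding and of $\alpha^{-1}$; the isometric property of $b$; and continuous dependence of $\|P_f\|$ on the $A_\infty$-constant of $|f'|$, which itself varies continuously in $\mu_2$. These together yield a uniform bound on $\|\widetilde L(\mu_1,\mu_2)\|_*$ on small neighborhoods of each $(\mu_1^0,\mu_2^0)$. The Hartogs-type theorem for maps between complex Banach spaces (separate holomorphy plus local boundedness implies joint holomorphy) then delivers the desired conclusion, which descends to the holomorphy of $L$ on ${\rm SS}_{\mathbb C}(\mathbb S)$.

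The main obstacle will be promoting separate to joint holomorphy: the push-forward $F(\mu_2)_*\mu_1$ couples the two arguments nontrivially, so $\widetilde L$ does not split as a sum of one-variable holomorphic pieces. The local-boundedness route is clean but requires careful continuity estimates on the operator norm of $P_f$ and on the $A_\infty$-constant of $|f'|$ across the constituent maps, together with the stability estimates for $r_\mu$ already recorded in the Remark after Proposition \ref{right}.
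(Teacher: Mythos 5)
Your strategy is essentially the paper's: factor $G(\mu_1,\mu_2)=H\circ F$ as in Proposition \ref{curve}, write $\log\gamma'=(\log h')\circ f+\log f'$, get holomorphic dependence of $\mathcal L_{H|_{\mathbb D^*}}$ via the push-forward and the biholomorphism $\alpha$, apply the bounded linear maps $b$ and $P_f$, and then pass from separate to joint holomorphy by a Hartogs-type theorem. Two points, however, deserve correction. First, your claim that $\mu_1\mapsto F_*\mu_1=r_{\tilde\mu_2}(\mu_1)$ is holomorphic into $\mathcal M_0(\mathbb D)$ ``by the Remark after Proposition \ref{right}'' is unjustified as stated: Proposition \ref{right} and its Remark require the base quasiconformal map to be bi-Lipschitz in the hyperbolic metric, and the normalized solution with dilatation the reflection $\tilde\mu_2$ of an arbitrary $\mu_2\in\mathcal M_0(\mathbb D^*)$ need not be bi-Lipschitz; without that hypothesis it is not even clear that $r_{\tilde\mu_2}$ preserves $\mathcal M_0(\mathbb D)$. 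The paper avoids this exactly by replacing $F|_{\mathbb D}$ with the Douady--Earle extension of the boundary map $f$ determined by $[\mu_2]$, which is a hyperbolically bi-Lipschitz diffeomorphism; since only boundary values enter $\widetilde L$, this substitution costs nothing, and you should make it (the same remark applies to your symmetric decomposition $G=\widetilde H\circ\widetilde F$ in the other variable). Second, the local-boundedness step you single out as the main obstacle is unnecessary: the Hartogs theorem for holomorphic maps between complex Banach spaces, in the form the paper cites (Chae, Theorem 14.27; Mujica, Theorem 36.8), upgrades separate holomorphy to joint holomorphy with no boundedness hypothesis. This is fortunate, because the uniform control of $\Vert P_{f}\Vert$ and of the $A_\infty$-constant of $|f'|$ as $\mu_2$ varies, which your boundedness argument would require, is precisely the part of your sketch that is not substantiated by the results quoted in the paper. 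With these two adjustments (Douady--Earle in place of the reflected coefficient, and the full Banach-space Hartogs theorem in place of the boundedness route), your argument coincides with the paper's proof, whether run at the Beltrami-coefficient level with local holomorphic sections, as you propose, or directly on the domain ${\rm SS}_{\mathbb C}(\mathbb S)\subset\mathcal B_0(\mathbb D^*)\times\mathcal B_0(\mathbb D)$, as the paper does.
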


\begin{proof}
We will prove that $L$ is holomorphic at any point $\gamma=G(\mu_1,\mu_2)|_{\mathbb S}$ in 
${\rm SS}_{\mathbb C}(\mathbb S)$.
Since ${\rm SS}_{\mathbb C}(\mathbb S)$ can be regarded as a domain of the product 
$\mathcal B_0(\mathbb D^*) \times \mathcal B_0(\mathbb D)$ of the Banach spaces, 
the Hartogs theorem for Banach spaces (see \cite[Theorem 14.27]{Ch} and \cite[Theorem 36.8]{Mu}) implies that
we have only to prove that $L$ is separately holomorphic. Thus, 
by fixing $[\mu_2] \in T_v(\mathbb D^*)$, we will show that 
$\log (G(\mu,\mu_2)|_{\mathbb S})' \in {\rm VMO}(\mathbb S)$ depends holomorphically on $[\mu] \in T_v(\mathbb D)$.
The other case is similarly treated.

By the proof of Proposition \ref{curve}, we have
$$
\log (G(\mu,\mu_2)|_{\mathbb S})' =\log h' \circ f + \log f',
$$
where $f: \mathbb S \to \mathbb S$ is the boundary extension of 
the fixed bi-Lipschitz diffeomorphism $F:\mathbb D \to \mathbb D$
defined by the Douady--Earle extension for the Teichm\"uller class $[\mu_2]$, 
and $h:\mathbb S \to \mathbb C$ is the restriction of
the quasiconformal homeomorphism $H$ of $\mathbb C$ that is conformal on $\mathbb D^*$ and
has the complex dilatation $F_* \mu$ on $\mathbb D$. 
Since $F$ is a
bi-Lipschitz diffeomorphism, we conclude by Proposition \ref{right} that $F_*$ acts on ${\mathcal M}_0(\mathbb D)$
as a biholomorphic automorphism, and its action projects down to $T_v(\mathbb D)$, which is biholomorphically equivalent
to $\mathcal T_v(\mathbb D^*)$ through $\alpha$ and $\beta$, 
also as a biholomorphic automorphism. 
Hence, $\log (H|_{\mathbb D^*})'
\in \mathcal T_v(\mathbb D^*) \subset {\rm VMOA}(\mathbb D^*)$ depends on $[\mu] \in T_v(\mathbb D)$ holomorphically.

We see that the boundary extension $b:{\rm VMOA}(\mathbb D^*) \to {\rm VMO}(\mathbb S)$ is
a bounded linear operator. Moreover, by Proposition \ref{pullback}, the pull-back operator
$P_f:{\rm VMO}(\mathbb S) \to {\rm VMO}(\mathbb S)$ induced by $f \in {\rm SS}(\mathbb S)$ is also a bounded linear operator.
Therefore,
$$
\log h' \circ f=P_f \circ b ({\mathcal L}_{H|_{\mathbb D^*}}) \in {\rm VMO}(\mathbb S)
$$
in particular depends on $[\mu] \in T_v(\mathbb D)$ holomorphically, and so does 
$\log (G(\mu,\mu_2)|_{\mathbb S})'$.
\end{proof}

By Proposition \ref{id}, ${\rm SS}_{\mathbb C}(\mathbb S)$ is identified with $T_v(\mathbb D) \times T_v(\mathbb D^*)$.
Then, we can represent any element of ${\rm SS}_{\mathbb C}(\mathbb S)$ by a pair $([\mu_1],[\mu_2])$.
For $\mu_1 \in \mathcal M_0(\mathbb D)$, the same notation $\mu_1$ denotes
the Beltrami coefficient on $\mathbb D^*$ given by
the reflection \eqref{refl}, and similarly
for $\mu_2 \in \mathcal M_0(\mathbb D^*)$, $\mu_2 \in \mathcal M_0(\mathbb D)$ denotes its reflection.
Let $j$ be the anti-holomorphic involution of ${\rm SS}_{\mathbb C}(\mathbb S)$ defined by
$([\mu_1],[\mu_2]) \mapsto ([\mu_2],[\mu_1])$. Then, the fixed point locus of $j$ is
$$
{\rm SS}(\mathbb S) \cong \{([\mu],[\mu]) \in T_v(\mathbb D) \times T_v(\mathbb D^*)\},
$$
which is thus the real-analytic submanifold of ${\rm SS}_{\mathbb C}(\mathbb S)$, and the map 
$T_v(\mathbb D) \to {\rm SS}(\mathbb S)$ given by 
$[\mu] \mapsto ([\mu],[\mu])$ is a real-analytic homeomorphism onto ${\rm SS}(\mathbb S)$.

We consider the restriction of $L:{\rm SS}_{\mathbb C}(\mathbb S) \to {\rm VMO}(\mathbb S)$ to ${\rm SS}(\mathbb S)$.
We also look at this by composing the real analytic equivalence $T_v(\mathbb D) \cong {\rm SS}(\mathbb S)$
and taking the real part of ${\rm VMO}(\mathbb S)$. The resulting map is 
$$
\widetilde L:T_v(\mathbb D) \to {\rm VMO}_{\mathbb R}(\mathbb S),
$$
which sends a strongly symmetric homeomorphism $h$ of $\mathbb S$ to $\log |h'|$.

\begin{corollary}\label{holo2}
Under the identification $T_v(\mathbb D) \cong {\rm SS}(\mathbb S)$, the map 
$\widetilde L:T_v(\mathbb D) \to {\rm VMO}_{\mathbb R}(\mathbb S)$ sending $h \in {\rm SS}(\mathbb S)$ to
$\log |h'| \in {\rm VMO}_{\mathbb R}(\mathbb S)$ is real-analytic.
\end{corollary}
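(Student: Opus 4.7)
The plan is to read off the real-analyticity of $\widetilde L$ from Theorem \ref{holo} by decomposing $\widetilde L$ into the three-step composition already outlined in the paragraph preceding the statement. Explicitly, I would write
\[
\widetilde L = {\rm Re}\circ L|_{{\rm SS}(\mathbb S)}\circ \delta,
\]
where $\delta: T_v(\mathbb D)\to{\rm SS}(\mathbb S)\subset{\rm SS}_{\mathbb C}(\mathbb S)$ is the diagonal map $[\mu]\mapsto([\mu],[\mu])$, and ${\rm Re}:{\rm VMO}(\mathbb S)\to{\rm VMO}_{\mathbb R}(\mathbb S)$ is projection onto the real part. It then suffices to check that each of the three factors is real-analytic.

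First, $\delta$ is a real-analytic parametrization of the fixed point locus ${\rm SS}(\mathbb S)$ of the anti-holomorphic involution $j$ on the complex Banach manifold ${\rm SS}_{\mathbb C}(\mathbb S)$, as already recorded in the setup just before the statement. Second, Theorem \ref{holo} gives that $L$ is holomorphic on ${\rm SS}_{\mathbb C}(\mathbb S)$, and a holomorphic map between complex Banach manifolds restricts to a real-analytic map on any real-analytic Banach submanifold; equivalently, $L\circ\delta$ is real-analytic as the composition of a real-analytic embedding with a holomorphic map, where the target ${\rm VMO}(\mathbb S)$ is viewed as a real Banach space. Third, ${\rm Re}$ is a bounded real-linear map and hence real-analytic. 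The composition of three real-analytic maps is real-analytic, which delivers the desired conclusion.

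It remains only to verify that this composition actually reproduces $\widetilde L$. For $h\in{\rm SS}(\mathbb S)$, viewed as a map $\mathbb S\to\mathbb C$ with image in $\mathbb S$, Theorem \ref{holo} gives $L(h)=\log h'=\log|h'|+i\arg h'$, so ${\rm Re}(L(h))=\log|h'|=\widetilde L(h)$, as required. Since all substantive work was already carried out in proving Theorem \ref{holo} (via simultaneous uniformization, Proposition \ref{right}, and the boundary behavior of $H|_{\mathbb D^*}$), there is no real obstacle left here; the corollary is the routine passage from the complex-analytic statement on the ambient manifold ${\rm SS}_{\mathbb C}(\mathbb S)$ to its totally real diagonal.
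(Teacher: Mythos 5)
Your proposal is correct and follows essentially the same route as the paper: the authors set up exactly the decomposition of $\widetilde L$ through the real-analytic diagonal embedding $[\mu]\mapsto([\mu],[\mu])$ onto the fixed-point locus ${\rm SS}(\mathbb S)$ of the anti-holomorphic involution $j$, the holomorphic map $L$ from Theorem \ref{holo}, and the real-part projection, and then state that the corollary follows immediately. Your additional check that ${\rm Re}(L(h))=\log|h'|$ just makes explicit what the paper leaves implicit.
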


This follows immediately from Theorem \ref{holo}. As stated in Corollary \ref{mainss},
$\widetilde L$ is in fact a real-analytic homeomorphism whose inverse $\widetilde L^{-1}$ is also real-analytic.

\section{The variant of Beurling--Ahlfors extension by heat kernel}\label{BAheat}

In order to prove the second statement of Theorem \ref{mains}, we need the variant of Beurling--Ahlfors extension by  heat kernel introduced in \cite{FKP}. In this section, we recall the definitions and results on this extension. 

We begin with the classical Beurling--Ahlfors extension given in \cite{BA}. 
Let 
$\phi(x)=\frac{1}{2} 1_{[-1,1]}(x)$ and $\psi(x)=\frac{r}{2} 1_{[-1,0]}(x)+\frac{-r}{2} 1_{[0,1]}(x)$
for some $r>0$, where $1_E$ denotes the characteristic function of $E \subset \mathbb R$. For any function $\varphi(x)$ 
on $\mathbb R$ and for $y>0$, we set $\varphi_y(x)=y^{-1} \varphi(y^{-1}x)$.
Then, for a quasisymmetric homeomorphism $f$ of the real line $\mathbb R$, the Beurling--Ahlfors extension
$F(x,y)=(U(x,y),V(x,y))$ for $(x,y) \in \mathbb U$  defined by the convolutions
$U(x,y)=(f \ast \phi_y)(x)$ and $V(x,y)=(f \ast \psi_y)(x)$ is a quasiconformal homeomorphism of 
the upper half-plane $\mathbb U$ onto itself. 
Modification and variation to the Beurling--Ahlfors extension have been made by changing the functions
$\phi$ and $\psi$. 

For a complex-valued function $u$ on $\mathbb R$ 
such that $u \in \rm BMO(\mathbb R)$ in general, we consider
a mapping $\gamma_u=\gamma: \mathbb{R} \to \mathbb{C}$ given by
\begin{equation}\label{gamma}
\gamma(x) = \gamma(0) + \int_0^x e^{u(t)} dt.
\end{equation}
Let $\phi(x)=\frac{1}{\sqrt \pi}e^{-x^2}$ and $\psi(x)=\phi'(x)=-2x \phi(x)$.
Then, we extend $\gamma$ to 
$\mathbb U$ as a differentiable map $F_u=F: \mathbb{U} \to \mathbb{C}$ defined by 
\begin{equation}\label{F}
\begin{split}
&F(x, y) = U(x, y) + iV(x, y);\\
U(x,y)&=(\gamma \ast \phi_y)(x),\ V(x,y)=(\gamma \ast \psi_y)(x).
\end{split}
\end{equation}

The partial derivatives of
$U$ and $V$ can be represented as follows:
\begin{align*}
U_x(x,y)&=\frac{\partial U}{\partial x}=(e^u \ast \phi_y)(x);\\
V_x(x,y)&=\frac{\partial V}{\partial x}=(e^u \ast \psi_y)(x);\\
U_y(x,y)&=\frac{\partial U}{\partial y}=(\gamma \ast \frac{\partial \phi_y}{\partial y})(x)
=\frac{1}{2}(e^u \ast \psi_y)(x)=\frac{1}{2}V_x(x,y);\\
V_y(x,y)&=\frac{\partial V}{\partial y}=(\gamma \ast \frac{\partial \psi_y}{\partial y})(x)
=U_x(x,y)+\frac{y^2}{2}(e^u \ast (\phi_y)'')(x),
\end{align*} 
where we have used
\begin{align*}
\frac{\partial \phi_y}{\partial y}=\frac{y}{2}\frac{\partial^2 \phi_y}{\partial x^2} =\frac{1}{2}(\psi_y)'; \quad 
\frac{\partial \psi_y}{\partial y}=\frac{\partial \phi_y}{\partial x}+\frac{y^2}{2} \frac{\partial^3 \phi_y}{\partial x^3}
=(\phi_y)'+\frac{y^2}{2}(\phi_y)'''.
\end{align*}
In particular, each of 
$U_y(x,y)$, $V_x(x,y)$, and $(U_x-V_y)(x,y)$ can be represented by the convolution $(e^u \ast a_y)(x)$ explicitly
for a certain real-valued function $a \in C^{\infty}(\mathbb R)$
such that $\int_{\mathbb R} a(x)dx=0$, $|a(x)|$ is an even function, and $a(x)=O(x^2e^{-x^2})$ $(|x| \to \infty)$. 
For instance, 
$V_x(x,y)=(e^u \ast \psi_y)(x)$
for $\psi(x)=-\frac{2}{\sqrt \pi}xe^{-x^2}$. 

Next, we consider the complex derivatives 
\begin{align*}
F_{\bar z}(x,y) &= \frac{1}{2}(F_x + iF_y) = \frac{1}{2}\left((U_x - V_y) + iU_y + iV_x \right);\\
F_z(x,y) &= \frac{1}{2}(F_x - iF_y) = U_x+\frac{1}{2}\left(-(U_x- V_y) - iU_y+ iV_x  \right). 
\end{align*}
From these expressions, we can find two complex-valued functions $\alpha, \beta \in C^{\infty}(\mathbb R)$ 
independent of $u$ such that
\begin{equation}\label{alphabeta}
F_{\bar{z}}=e^u \ast \alpha_y(x), \quad F_{z}=e^u \ast \beta_y(x),
\end{equation}
and $\int_{\mathbb{R}}\alpha(x)dx = 0$, $\alpha(x)=O(x^2e^{-x^2})$ $(|x| \to \infty)$, 
$\int_{\mathbb{R}}\beta(x)dx = 1$, $\beta(x)=O(x^2e^{-x^2})$ $(|x| \to \infty)$.
In particular, we can assume that
$$
|\alpha(x)| \leq Ce^{-|x|}, \quad |\beta(x)| \leq Ce^{-|x|}
$$
for some constant $C>0$. 
We set $\mu_u(x,y)=F_{\bar z}/F_z$, and call it the complex dilatation of $F$ even though the map $F=F_u$ 
defined by $(\ref{F})$ is not necessarily a quasiconformal homeomorphism. 

In the case where $u$ is a real-valued function such that $e^u$ is
an $A_\infty$-weight, the situation becomes simpler.
In this case, the extension $F:\mathbb U \to \mathbb U$ of $\gamma:\mathbb R \to \mathbb R$ is
the variant of Beurling--Ahlfors extension by heat kernel.
The following results are obtained in \cite[Theorem 4.2]{FKP} and \cite{WM-2, WM-4}.

\begin{theorem}[]\label{FKP}
For a real-valued $u \in {\rm BMO}_{\mathbb R}(\mathbb R)$ such that $e^u$ is an $\rm A_\infty$-weight, 
the map $F_u$ given by
the variant of Beurling--Ahlfors extension by heat kernel is a quasiconformal
diffeomorphism of $\mathbb U$ onto itself whose complex dilatation $\mu_u$ belongs to $\mathcal M(\mathbb U)$.
Moreover, it is bi-Lipschitz with respect to the hyperbolic metric. 
If $u \in {\rm VMO}_{\mathbb R}(\mathbb R)$ in addition, then $\mu_u \in \mathcal M_0(\mathbb U)$.
\end{theorem}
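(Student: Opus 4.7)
The plan is to analyse $F_u$ through the convolutional identities \eqref{alphabeta}, exploiting simultaneously $\int\beta=1$, the cancellation $\int\alpha=0$, the exponential decay of the kernels, and the inverse Jensen characterization \eqref{iff} of $A_\infty$. Smoothness of $F_u$ on $\mathbb U$ is immediate from the Gaussian structure. First I would establish a two-sided bound
$$c_1\,(e^u\ast\phi_y)(x)\ \leq\ |F_z(x,y)|\ \leq\ c_2\,(e^u\ast\phi_y)(x),$$
with constants depending only on $C_\infty(e^u)$. The lower bound starts from $\int\beta=1$: decomposing $\beta=c\phi+(\beta-c\phi)$ for a sufficiently small $c>0$, the main term contributes $c\,(e^u\ast\phi_y)(x)$ and the rapidly decaying remainder is absorbed using \eqref{doubling} on dyadic annuli. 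A parallel analysis on $U_y,V_x,V_y,U_x-V_y$ (each a convolution of $e^u$ with a real kernel $a_y$ with $a(x)=O(x^2 e^{-x^2})$) shows in particular $V_y\gtrsim U_x>0$, so $V(x,y)>0$ for $y>0$. Once quasiregularity $\Vert\mu_u\Vert_\infty<1$ is in hand, standard quasiconformal theory combined with the boundary behaviour $F_u(\cdot,0^+)=\gamma$ promotes $F_u$ to a global diffeomorphism of $\mathbb U$ onto itself.

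Next, the key matching upper estimate is
$$|F_{\bar z}(x,y)|\ \leq\ C\cdot \bigl(|u-u_{I(x,y)}|\ast|\alpha_y|\bigr)(x)\cdot (e^u\ast\phi_y)(x),$$
where $I(x,y)$ is an interval of length $\sim y$ centred at $x$. The cancellation $\int\alpha=0$ allows us to rewrite
$$F_{\bar z}(x,y)=\bigl((e^u-e^{u_{I(x,y)}})\ast\alpha_y\bigr)(x)=e^{u_{I(x,y)}}\bigl((e^{u-u_{I(x,y)}}-1)\ast\alpha_y\bigr)(x),$$
after which John--Nirenberg \eqref{JN} (applied on dyadically dilated copies of $I(x,y)$ and summed against the exponential decay of $\alpha$) together with \eqref{iff} converts the right-hand side into the claimed form. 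Dividing by the lower bound on $|F_z|$ yields the pointwise bound $|\mu_u(x,y)|\leq C\cdot(|u-u_{I(x,y)}|\ast|\alpha_y|)(x)$ and, in particular, $\Vert\mu_u\Vert_\infty<1$. Integrating the square over a Carleson box $Q_I=I\times(0,|I|)$ and applying the standard BMO square-function identity then gives $\int_{Q_I}|\mu_u|^2 y^{-1}\,dxdy\leq C\,\Vert u\Vert_*^2\,|I|$, i.e.\ $\mu_u\in\mathcal M(\mathbb U)$. The bi-Lipschitz property in the hyperbolic metric reduces, via the companion comparability $V(x,y)/y\asymp(e^u\ast\phi_y)(x)$ (obtained by integrating $V_y\asymp U_x$ and using \eqref{doubling}), to the two-sided bounds on $|F_z|$ already proved.

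The VMO refinement is then a localization: for $u\in{\rm VMO}_{\mathbb R}(\mathbb R)$ the oscillations $\frac{1}{|J|}\int_J|u-u_J|$ vanish uniformly as $|J|\to 0$, so $(|u-u_{I(x,y)}|\ast|\alpha_y|)(x)\to 0$ uniformly in $x$ as $y\to 0$, upgrading the Carleson bound to a vanishing Carleson bound and yielding $\mu_u\in\mathcal M_0(\mathbb U)$. The main obstacle I anticipate is the simultaneous matching of three quantities of different nature at the same scale $y$: the pointwise $A_\infty$ lower bound on $|F_z|$, the cancellation-based upper bound on $|F_{\bar z}|$, and the BMO tail from John--Nirenberg. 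Keeping every implicit constant dependent only on $C_\infty(e^u)$, rather than on $\Vert u\Vert_*$, forces a dyadic chaining that sums kernel decay against John--Nirenberg tails on every scale, which is the technical heart of the Fefferman--Kenig--Pipher argument.
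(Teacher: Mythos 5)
First, note that the paper does not prove this statement at all: Theorem \ref{FKP} is quoted from \cite[Theorem 4.2]{FKP} and \cite{WM-2,WM-4}, and the estimates the paper does carry out (Lemmas \ref{bounded} and \ref{mainlemma}) are deliberately restricted to a small ${\rm BMO}$ neighbourhood of a function $u_0\in L^\infty_{\mathbb R}(\mathbb R)$. Your sketch tries to reprove the full theorem, and its central mechanism fails exactly outside that small-norm regime. Your key upper estimate $|F_{\bar z}|\leq C\,(|u-u_{I(x,y)}|\ast|\alpha_y|)(x)\,(e^u\ast\phi_y)(x)$ is to be obtained from $F_{\bar z}=e^{u_{I}}\bigl((e^{u-u_{I}}-1)\ast\alpha_y\bigr)$ via Cauchy--Schwarz and John--Nirenberg; but the second factor this produces, $\bigl(e^{2|u-u_{I}|}\ast|\alpha_y|\bigr)^{1/2}$, is controlled by John--Nirenberg only when $\Vert u\Vert_*$ is small compared with $C_{JN}$, and for a general $A_\infty$ weight it can even be infinite: for $e^u=|x|^{10}\in A_\infty$ the function $e^{-2(u-u_I)}$ is not locally integrable near the origin. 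This is precisely why Propositions \ref{prep1} and \ref{nearreal} in the paper require $\Vert w-u_0\Vert_*$ small. More importantly, even granting the pointwise bound, it only yields $\Vert\mu_u\Vert_\infty\lesssim\Vert u\Vert_*$ with constants depending on $C_\infty(e^u)$; since $\Vert u\Vert_*$ may be arbitrarily large for an $A_\infty$ weight, this does not give $\Vert\mu_u\Vert_\infty<1$, so quasiconformality --- the heart of the theorem --- is not established. The actual argument of Fefferman--Kenig--Pipher proves a uniform gap $|F_{\bar z}|\leq k|F_z|$ with $k<1$ through a lower bound on the Jacobian of the form $J_F\gtrsim (e^u\ast\phi_y)^2$, exploiting the doubling/$A_\infty$ structure of the heat extension of the weight itself rather than exponential moments of $u$; your outline names this as ``the technical heart'' but does not supply it. The bi-Lipschitz claim suffers from the same omission, since it too rests on Jacobian comparability and not merely on the two-sided bound for $|F_z|$.

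A secondary but genuine flaw is your lower bound for $|F_z|$: decomposing $\beta=c\phi+(\beta-c\phi)$ and ``absorbing'' the remainder cannot work, because the remainder carries total integral $1-c$ and its convolution with $e^u$ is in general comparable to the main term, not a small fraction of it. The correct (and simpler) observation is structural: from $F_z=U_x+\tfrac{y^2}{4}\,e^u\ast(\phi_y)''+\tfrac{i}{4}\,e^u\ast\psi_y$ one gets ${\rm Re}\,\beta=(x^2+\tfrac12)\phi>0$, hence ${\rm Re}\,F_z\geq\tfrac12\,(e^u\ast\phi_y)>0$; this positivity of the real part is exactly what the paper uses in the proof of Lemma \ref{bounded} when it writes $|\beta_y\ast e^{u-u_{I}}|\asymp|\phi_y\ast e^{u-u_{I}}|$ for real $u$. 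Your remaining steps (the Carleson estimate via a square-function bound for $(|u-u_{I(x,y)}|\ast|\alpha_y|)^2\,y^{-1}dxdy$, and the VMO refinement by splitting scales) are morally in line with what is done in \cite{WM-2} and in Section 5 of the paper, but they are downstream of the pointwise dilatation bound and so inherit the gap above. As it stands, your proposal proves the theorem only under a smallness assumption on $\Vert u\Vert_*$ (or for $u$ near $L^\infty$), not for an arbitrary $A_\infty$ weight $e^u$.
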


We note that a Carleson measure $\lambda$ on $\mathbb U$ can be defined by the condition
$$
\Vert \lambda \Vert_c = \sup \frac{\lambda(I \times (0,|I|))}{|I|}<\infty,
$$
where the supremum is taken over all bounded intervals $I$ on $\mathbb R$. A vanishing Carleson measure and 
the spaces of the Beltrami coefficients $\mathcal M(\mathbb U)$ and $\mathcal M_0(\mathbb U)$ are defined in the same way as
in the case of $\mathbb D$.

\section{Local boundedness of the complex dilatations}
In this section,
we prepare the essential step for the proof of the second part of Theorem \ref{mains}. 
This is based on the variant of Beurling-Ahlfors extension by heat kernel reviewed in Section 4.
We will show the local boundedness of the complex dilatations of these extensions of certain strongly symmetric embeddings.

First, we state the following two claims on ${\rm BMO}(\mathbb R)$ given in \cite[Proposition 2.4, Lemma 2.6]{WM-4} 
which play key roles in the next two lemmas. Let $I(x, y) \subset \mathbb R$ be 
the interval $(x-y, x+y)$ for any $x \in \mathbb R$ and $y > 0$. 

\begin{proposition}\label{prep1}
Let $u$ and $\varphi$ be 
complex-valued functions on $\mathbb R$ 
such that $u \in \rm BMO(\mathbb R)$ and $|\varphi(x)| \leq Ce^{-|x|}$ for some constant $C>0$.
Let $k \in \mathbb N$ be a positive integer.
Then,
$$
\int_{\mathbb{R}}|\varphi_y(x-t)||u(t)-u_{I(x,y)}|^kdt \leq C(k) \Vert u \Vert_*^k
$$
for a positive constant $C(k)$ depending on $k$.
\end{proposition}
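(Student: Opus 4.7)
The plan is to reduce the inequality to a scale-invariant form and then dyadically decompose the integration domain, controlling each piece via the John--Nirenberg inequality.

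First, I would normalize by the affine change of variable $t = x + ys$. Setting $v(s) = u(x+ys) - u_{I(x,y)}$, translation and dilation invariance of the BMO norm give $\|v\|_* = \|u\|_*$, and an easy calculation confirms $v_{I(0,1)} = 0$. Since $|\varphi_y(x-t)|\,dt = |\varphi(-s)|\,ds$, the integral becomes
\[
\int_{\mathbb R}|\varphi(-s)|\,|v(s)|^k\,ds,
\]
so it suffices to bound this quantity by $C(k)\|v\|_*^k$ for any $v \in {\rm BMO}(\mathbb R)$ with $v_{I(0,1)} = 0$.

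Next, I would decompose $\mathbb R$ into the dyadic shells $A_0 = I(0,1)$ and $A_n = I(0,2^n) \setminus I(0,2^{n-1})$ for $n \geq 1$. A standard consequence of the John--Nirenberg inequality \eqref{JN}, obtained by the layer-cake formula, is the $k$-th moment bound
\[
\frac{1}{|J|}\int_J |v - v_J|^k\,dt \leq C(k)\|v\|_*^k,
\]
valid for any bounded interval $J$. On $A_0$ one has $|\varphi(-s)| \leq C$, and this moment bound applied with $J = I(0,1)$ immediately handles the contribution. On $A_n$ for $n \geq 1$ the hypothesis $|\varphi(x)| \leq Ce^{-|x|}$ gives $|\varphi(-s)| \leq C e^{-2^{n-1}}$. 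I would split
\[
|v(s)|^k \leq 2^{k-1}\bigl(|v(s) - v_{I(0,2^n)}|^k + |v_{I(0,2^n)}|^k\bigr),
\]
and invoke the telescoping estimate $|v_{I(0,2^n)} - v_{I(0,1)}| \leq 2n\|v\|_*$ (obtained by chaining adjacent dyadic intervals, each pair contributing at most $2\|v\|_*$) together with the $k$-th moment bound on $I(0,2^n)$ to get
\[
\int_{A_n}|\varphi(-s)||v(s)|^k\,ds \leq C\,2^{n+1}e^{-2^{n-1}}\bigl(C(k) + (2n)^k\bigr)\|v\|_*^k.
\]
Summing over $n \geq 1$ converges since $e^{-2^{n-1}}$ decays super-exponentially and dominates the factors $2^n$ and $n^k$, yielding the desired estimate.

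The only substantive point is that the BMO averages $v_{I(0,2^n)}$ grow linearly in $n$, so the oscillation of $v$ on far-away shells is not bounded by $\|v\|_*$ alone. This logarithmic growth of BMO averages is precisely the place where the exponential decay hypothesis on $\varphi$ enters essentially: polynomial decay alone would not close the estimate, whereas exponential decay comfortably absorbs both the $n^k$ factor from the BMO oscillation and the volume factor $2^n$ of the shells.
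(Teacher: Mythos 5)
Your argument is correct and follows essentially the same standard route as the source the paper defers to for this statement (\cite[Proposition 2.4]{WM-4}): rescale to the unit interval, decompose into dyadic shells, use the John--Nirenberg $k$-th moment bound together with the logarithmic growth $|v_{I(0,2^n)}|\lesssim n\Vert v\Vert_*$, and let the exponential decay of $\varphi$ absorb the shell volume and the $n^k$ factor. One small aside: your closing claim that polynomial decay of $\varphi$ ``would not close the estimate'' is too strong---decay like $|x|^{-1-\epsilon}$ already dominates the $2^n\,n^k$ growth in your shell sums---but this remark is not used in the proof and does not affect its validity.
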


\begin{proposition}\label{nearreal}
Let $\varphi$ be a 
complex-valued function on $\mathbb R$ 
such that $|\varphi(x)| \leq Ce^{-|x|}$ for some constant $C>0$. 
For each real-valued essentially bounded function $u_0 \in L^\infty_{\mathbb R}(\mathbb R)$,
there exists a positive constant $C(u_0)$ depending on $\Vert u_0 \Vert_{\infty}$  such that every 
complex-valued function $u \in {\rm BMO}(\mathbb R)$ with $\Vert u-u_0 \Vert_{*} \leq C_{JN}/2$
satisfies 
$$
\int_{\mathbb{R}}|\varphi_y(x-t)|e^{|u(t)-u_{I(x,y)}|}dt \leq C(u_0).
$$
\end{proposition}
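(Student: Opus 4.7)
The plan is to reduce the exponential integral to a controlled application of the John--Nirenberg inequality, exploiting the smallness assumption $\Vert u-u_0 \Vert_* \leq C_{JN}/2$ in an essential way. Setting $v=u-u_0$, one has $\Vert v \Vert_* \leq C_{JN}/2$; since $u_0 \in L^\infty_{\mathbb R}(\mathbb R)$, the triangle inequality gives
$$
|u(t)-u_{I(x,y)}| \leq |v(t)-v_{I(x,y)}| + |u_0(t)-(u_0)_{I(x,y)}| \leq |v(t)-v_{I(x,y)}| + 2\Vert u_0 \Vert_\infty.
$$
Hence the desired integral is dominated by $e^{2\Vert u_0 \Vert_\infty}$ times the analogous integral with $v$ in place of $u$, and it suffices to bound the latter by a universal constant; then $C(u_0) = C\, e^{2\Vert u_0\Vert_\infty}$ will work.

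For the main estimate I would use a dyadic decomposition about $x$. Write $J=I(x,y)$, $J_j=I(x,2^j y)$, and split $\mathbb R$ into $J_0$ together with the annuli $J_j \setminus J_{j-1}$ for $j \geq 1$. On each annulus one has $|x-t| \geq 2^{j-1} y$, so the exponential decay of $\varphi$ yields $|\varphi_y(x-t)| \leq C y^{-1} e^{-2^{j-1}}$. The standard BMO estimate $|v_{J_j}-v_J| \leq 2j \Vert v \Vert_*$ lets one transfer the average from $J$ to $J_j$, reducing matters to estimating $|J_j|^{-1}\int_{J_j} e^{|v(t)-v_{J_j}|}\,dt$. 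Writing this via the layer-cake formula and invoking \eqref{JN} to bound the distribution function of $|v-v_{J_j}|$ on $J_j$ gives
$$
\frac{1}{|J_j|}\int_{J_j} e^{|v-v_{J_j}|}\, dt \leq 1 + C_0 \int_0^\infty e^{\lambda(1-C_{JN}/\Vert v \Vert_*)}\, d\lambda \leq 1 + C_0,
$$
where the hypothesis $\Vert v \Vert_* \leq C_{JN}/2$ forces the exponent to be at most $-1$. Assembling everything produces a contribution of order $2^j e^{-2^{j-1}} e^{2j \Vert v \Vert_*}$ on the $j$-th annulus, which sums over $j$ to a universal constant; the piece over $J_0$ is handled identically using the sup bound $|\varphi_y| \leq C/y$ on $J_0$.

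The one delicate point, and the main obstacle if one tries to weaken the smallness hypothesis, is the borderline integrability of $e^{|v-v_J|}$: John--Nirenberg only gives a finite bound when $\Vert v\Vert_*$ lies strictly below $C_{JN}$, and the threshold $C_{JN}/2$ is exactly what yields a universal bound independent of $v$. Everything else is routine: the Gaussian-type tail decay $e^{-2^{j-1}}$ easily dominates both the linear factor $2^j$ from $|J_j|$ and the exponential factor $e^{2j\Vert v\Vert_*}$ produced by the BMO average transfer, and the final constant depends on $u_0$ only through $\Vert u_0 \Vert_\infty$, as required.
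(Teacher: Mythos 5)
Your proof is correct: the reduction $u=(u-u_0)+u_0$ costing only a factor $e^{2\Vert u_0\Vert_\infty}$, the dyadic annuli $I(x,2^jy)\setminus I(x,2^{j-1}y)$ with the telescoping bound $|v_{I(x,2^jy)}-v_{I(x,y)}|\leq 2j\Vert v\Vert_*$, and the John--Nirenberg layer-cake estimate made convergent by $\Vert v\Vert_*\leq C_{JN}/2$ all fit together, and the decay $e^{-2^{j-1}}$ indeed dominates the factors $2^{j+1}e^{2j\Vert v\Vert_*}$. The paper itself does not prove this proposition but quotes it from \cite{WM-4}, and your argument is essentially the standard one used there, so there is nothing further to add.
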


In the following two lemmas, we prove the local boundedness of the complex dilatation $\mu_w$ of 
the variant of Beurling--Ahlfors extension by heat kernel given by $w \in {\rm BMO}(\mathbb R)$
when $w$ is sufficiently close to $L^\infty_{\mathbb R}(\mathbb R)$. 
Hereafter, we use the comparability symbols $\asymp$ and $\lesssim$ instead of mentioning the existence 
of some constants that are uniform with respect to the obvious quantities in the situation.

\begin{lemma}\label{bounded}
For any real-valued bounded function $u_0 \in L^{\infty}_{\mathbb R}(\mathbb R)$, there are $\delta_1>0$ and $M_1>0$ such that
if $w \in {\rm BMO}(\mathbb R)$ satisfies
$\Vert w-u_0 \Vert_{*} <\delta_1$, then $\Vert \mu_{w} \Vert_{\infty} \leq M_1$.
\end{lemma}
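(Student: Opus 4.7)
The plan is to exploit the integral representations $F_{\bar z}=e^w\ast\alpha_y$ and $F_z=e^w\ast\beta_y$ from \eqref{alphabeta} together with the normalizations $\int\alpha=0$, $\int\beta=1$ to obtain the quotient form
\begin{equation*}
\mu_w(x,y)=\frac{N_w(x,y)}{1+D_w(x,y)},\qquad N_w(x,y):=\int_{\mathbb R}\alpha_y(x-t)\bigl(e^{w(t)-w_{I(x,y)}}-1\bigr)\,dt,
\end{equation*}
with $D_w$ defined analogously using $\beta_y$. The crucial point of this normalization is that the (unbounded) factor $e^{w_{I(x,y)}}$ cancels, so the analysis reduces to controlling integrals against $\alpha_y$ and $\beta_y$ of $e^{s(t)}-1$ with $s(t)=w(t)-w_{I(x,y)}$, precisely the type of quantities that the BMO-machinery of Section~5 is designed to handle.

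Writing $w=u_0+v$ with $\Vert v\Vert_{*}<\delta_1$, I would set $A(t)=u_0(t)-u_{0,I(x,y)}$ and $B(t)=v(t)-v_{I(x,y)}$, and use the identity $e^{A+B}-1=e^{A}(e^{B}-1)+(e^{A}-1)$ to split $N_w=N_{u_0}+\mathcal E_N$ and $D_w=D_{u_0}+\mathcal E_D$. Since $|A|\le 2\Vert u_0\Vert_{\infty}$ and $|e^{B}-1|\le|B|e^{|B|}$, Cauchy--Schwarz on the finite measure $|\alpha_y(x-t)|\,dt$ yields
\begin{equation*}
|\mathcal E_N|\le e^{2\Vert u_0\Vert_{\infty}}\Bigl(\int_{\mathbb R}|\alpha_y(x-t)||B(t)|^2\,dt\Bigr)^{1/2}\Bigl(\int_{\mathbb R}|\alpha_y(x-t)|e^{2|B(t)|}\,dt\Bigr)^{1/2}.
\end{equation*}
Proposition \ref{prep1} with $k=2$ and $u=v$ bounds the first factor by a constant multiple of $\Vert v\Vert_{*}\le\delta_1$, and Proposition \ref{nearreal} applied to $2v$ with reference function zero (which requires $\delta_1\le C_{JN}/4$) bounds the second by an absolute constant. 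The same argument with $\beta_y$ in place of $\alpha_y$ handles $\mathcal E_D$, so $|\mathcal E_N|,|\mathcal E_D|\lesssim\delta_1$ with implicit constants depending on $\Vert u_0\Vert_{\infty}$. Moreover $|N_{u_0}|$ and $|D_{u_0}|$ are trivially bounded via $|e^{A}-1|\le e^{2\Vert u_0\Vert_{\infty}}$ and the finite $L^1$-norms of $\alpha_y,\beta_y$.

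The main obstacle is obtaining a uniform strictly positive lower bound on $|1+D_{u_0}(x,y)|$; without it the denominator could collapse even when the numerator is well controlled. To handle this I would use $1+D_{u_0}(x,y)=e^{-u_{0,I(x,y)}}(e^{u_0}\ast\beta_y)(x)$ together with the explicit form $\beta=\phi+\tfrac14\phi''+\tfrac{i}{4}\psi$ derived from $F_z=\tfrac12(U_x+V_y)+\tfrac{i}{2}(V_x-U_y)$ in Section~4. A direct computation gives $\mathrm{Re}\,\beta(x)=\tfrac{1}{\sqrt\pi}e^{-x^2}(\tfrac12+x^2)$, which is strictly positive with $\int_{\mathbb R}\mathrm{Re}\,\beta\,dx=1$. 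Since $u_0$ is real, $(e^{u_0}\ast\mathrm{Re}\,\beta_y)(x)\ge\inf_t e^{u_0(t)}\ge e^{-\Vert u_0\Vert_{\infty}}$, so
\begin{equation*}
|1+D_{u_0}(x,y)|\ge\mathrm{Re}\,(1+D_{u_0}(x,y))\ge e^{-2\Vert u_0\Vert_{\infty}}.
\end{equation*}
Choosing $\delta_1$ small enough that also $|\mathcal E_D|\le\tfrac12 e^{-2\Vert u_0\Vert_{\infty}}$ forces $|1+D_w|\ge\tfrac12 e^{-2\Vert u_0\Vert_{\infty}}$; combined with the uniform upper bound on $|N_w|\le|N_{u_0}|+|\mathcal E_N|$, this yields the desired $\Vert\mu_w\Vert_{\infty}\le M_1$ for some $M_1=M_1(u_0)$.
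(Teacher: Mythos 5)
Your proof is correct, and it reaches the conclusion by a genuinely (if mildly) different route from the paper's. Both arguments begin with the same normalization --- cancelling $e^{w_{I(x,y)}}$ in the quotient $F_{\bar z}/F_z$, which is exactly \eqref{mu-fraction} --- and both ultimately rest on Propositions \ref{prep1} and \ref{nearreal} together with Cauchy--Schwarz and the elementary inequality $|e^z-1|\le |z|e^{|z|}$. The difference is in the decomposition and in how the denominator is kept away from zero. The paper splits $w=u+iv$ into real and imaginary parts: for the real part it bounds $|\beta_y\ast e^{u-u_{I(x,y)}}|$ from below by comparing it with the heat-kernel convolution $\phi_y\ast e^{u-u_{I(x,y)}}$ and applying Jensen's inequality, which yields the lower bound $\tfrac{2}{\sqrt{\pi}e}\exp(-\Vert u\Vert_*)$ for an arbitrary real-valued $u\in{\rm BMO}(\mathbb R)$, and then treats $e^{i(v-v_{I(x,y)})}-1$ as the perturbation. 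You instead split $w=u_0+v$ with $v$ the full complex perturbation, evaluate the denominator at $u_0$ exactly, and get the lower bound $e^{-2\Vert u_0\Vert_\infty}$ from the explicit formula $\mathrm{Re}\,\beta(x)=\tfrac{1}{\sqrt\pi}e^{-x^2}\bigl(\tfrac12+x^2\bigr)>0$ with $\int_{\mathbb R}\mathrm{Re}\,\beta\,dx=1$, which you correctly derive from the derivative formulas in Section \ref{BAheat} (the paper only asserts the existence of $\beta$ with certain decay and normalization, so this small computation is your own, and it is the same positivity that underlies the paper's comparability $|\beta_y\ast e^{u-u_{I}}|\asymp|\phi_y\ast e^{u-u_{I}}|$). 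Your version uses the hypothesis $\Vert w-u_0\Vert_*<\delta_1$ directly rather than through separate smallness of $\Vert u-u_0\Vert_*$ and $\Vert v\Vert_*$, at the price that the unperturbed lower bound is available only at the bounded reference $u_0$ rather than at every real BMO function; for this lemma the two are equally effective. Your invocations of Proposition \ref{prep1} (with $k=2$, applied to $v$) and Proposition \ref{nearreal} (applied to $2v$ with reference function $0$, which indeed forces $\delta_1\le C_{JN}/4$) are legitimate, and the final choice of $\delta_1$ so that $|\mathcal E_D|\le\tfrac12 e^{-2\Vert u_0\Vert_\infty}$ closes the argument.
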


\begin{proof}
For any $w \in {\rm BMO}(\mathbb R)$, 
we set $w=u+iv$ where $u$ and $v$ are real-valued.
Fixing $I(x,y)=(x-y,x+y) \subset \mathbb R$ for $(x,y) \in \mathbb U$,
we consider
\begin{equation}\label{mu-fraction}
|\mu_{w}(x,y)|=\frac{|\alpha_y\ast e^{w}(x)|}{|\beta_y\ast e^{w}(x)|}
=\frac{|\alpha_y\ast e^{w- w_{I(x,y)}}(x)|}{|\beta_y\ast e^{w-w_{I(x,y)}}(x)|}.
\end{equation}
The denominator is estimated from below as
\begin{equation*}
\begin{split}
|\beta_y\ast e^{w-w_{I(x,y)}}(x)|&=|\beta_y\ast(e^{u-u_{I(x,y)}}(e^{i(v-v_{I(x,y)})}-1))(x)
+\beta_y\ast e^{u-u_{I(x,y)}}(x)|\\
&\geq |\beta_y\ast e^{u-u_{I(x,y)}}(x)|-|\beta_y\ast(e^{u-u_{I(x,y)}}(e^{i(v-v_{I(x,y)})}-1))(x)|.
\end{split}
\end{equation*}
Here, since $u$ is real-valued, we see that
\begin{equation*}
\begin{split}
|\beta_y\ast e^{u-u_{I(x,y)}}(x)| &\asymp 
|\phi_y\ast e^{u-u_{I(x,y)}}(x)|\\
 & \geq \int_{\mathbb{R}}\phi_y(x-t) e^{-|u(t)-u_{I(x,y)}|} dt\\
& \geq \frac{2}{\sqrt \pi e}\left(\frac{1}{2y}\int_{|x-t|<y} e^{-|u(t)-u_{I(x,y)}|} dt \right)\\ 
& \geq \frac{2}{\sqrt \pi e}\exp (-\Vert u \Vert_*) 
\end{split}
\end{equation*}
for $\phi(x)=\frac{1}{\sqrt \pi}e^{-x^2}$. 
On the contrary, the Cauchy--Schwarz inequality and
$|e^{ix}-1| \leq |x|$
yield that
\begin{equation}\label{denomi}
\begin{split}
&\quad |\beta_y\ast(e^{u-u_{I(x,y)}}(e^{i(v-v_{I(x,y)})}-1))(x)| \\
&\leq \int_{\mathbb R} |\beta_y(x-t)|e^{u(t)-u_{I(x,y)}}|e^{i(v(t)-v_{I(x,y)})}-1|dt \\
&\leq \left(\int_{\mathbb R} |\beta_y(x-t)|e^{2(u(t)-u_{I(x,y)})}dt \right)^{1/2}
\left(\int_{\mathbb R} |\beta_y(x-t)||v(t)-v_{I(x,y)}|^2 dt \right)^{1/2}.
\end{split}
\end{equation}
By Proposition \ref{nearreal}, the first factor of the last line of (\ref{denomi}) is bounded if $\Vert u-u_0 \Vert_{*}$ is
sufficiently small.
By Proposition \ref{prep1}, the second factor is bounded by a multiple of $\Vert v \Vert_*$.
Thus, the denominator in the fraction of (\ref{mu-fraction}) is
bounded away from $0$ if $\Vert v \Vert_{*}$ is sufficiently small.
In particular, there is some $0 < \delta_1 < C_{JN}/4$  such that if $\Vert u-u_0 \Vert_{*} < \delta_1/2$ and
if $\Vert v \Vert_{*} < \delta_1/2$, then the denominator is uniformly bounded away from $0$. 
Hence, there is some constant $A>0$ such that if $\Vert w-u_0 \Vert_{*} <\delta_1$ then
\begin{equation}\label{onlyalpha}
|\mu_{w}(x,y)| \leq A |\alpha_y\ast e^{w-w_{I(x,y)}}(x)|.
\end{equation}

In \eqref{onlyalpha}, $|\alpha_y\ast e^{w-w_{I(x,y)}}(x)|$ coincides with
$|\alpha_y\ast (e^{w-w_{I(x,y)}}-1)(x)|$ by $\int_{\mathbb R} \alpha(x)dx=0$. Then, similarly to the above estimate,  the Cauchy--Schwarz inequality and $|e^z - 1| \leq |z|e^{|z|}$ yield that
\begin{equation}\label{numer}
\begin{split}
&\quad |\alpha_y\ast (e^{w-w_{I(x,y)}}-1)(x)|\\
&\leq \int_{\mathbb R} |\alpha_y(x-t)||w(t)-w_{I(x,y)}|e^{|w(t)-w_{I(x,y)}|}dt\\
&\leq \left(\int_{\mathbb R} |\alpha_y(x-t)||w(t)-w_{I(x,y)}|^2dt \right)^{1/2}
\left(\int_{\mathbb R} |\alpha_y(x-t)|e^{2|w(t)-w_{I(x,y)}|}dt \right)^{1/2}.
\end{split}
\end{equation}
Again by Propositions \ref{prep1} and \ref{nearreal}, this is bounded if
$\Vert w-u_0 \Vert_{*} \leq C_{JN}/4$.
Thus, we see that $\Vert \mu_{w} \Vert_\infty \leq M_1$ for some
constant $M_1>0$ if
$\Vert w-u_0 \Vert_{*} <\delta_1$. 
\end{proof}

In the next lemma, the assumption on $\Vert w-u_0 \Vert_{*}$ is necessary for using lemma \ref{bounded}
at the beginning. In the rest of the proof, only the smallness of $\Vert u-u_0 \Vert_{*}$ is required.

\begin{lemma}\label{mainlemma}
For any real-valued bounded function $u_0 \in L^{\infty}_{\mathbb R}(\mathbb R)$, there are $\delta_2>0$ and $M_2>0$ such that
if $w \in {\rm BMO}(\mathbb R)$ satisfies
$\Vert w-u_0 \Vert_{*} <\delta_2$, then $\Vert \mu_{w} \Vert_{c} \leq M_2$.
\end{lemma}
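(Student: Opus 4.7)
My plan is to refine the $L^\infty$ estimate of Lemma \ref{bounded} into a Carleson norm estimate by exploiting the cancellation $\int_{\mathbb R}\alpha(x)\,dx = 0$ together with a Taylor expansion of $e^z-1$. First, I take $\delta_2 \le \min(\delta_1, C_{JN}/4)$ and apply Lemma \ref{bounded}, securing both $\|\mu_w\|_\infty \le M_1$ and the denominator estimate $|\beta_y \ast e^w(x)| \gtrsim e^{u_{I(x,y)}}$. Since $\int_{\mathbb R}\alpha = 0$, the numerator is rewritten as
\[
\alpha_y \ast e^w(x) = e^{w_{I(x,y)}}\int_{\mathbb R}\alpha_y(x-t)\bigl(e^{w(t)-w_{I(x,y)}}-1\bigr)\,dt,
\]
and the bound $\|\mu_w\|_c \le M_2$ reduces to a uniform Carleson estimate
\[
\frac{1}{|I|}\iint_{I\times(0,|I|)}\Bigl|\int_{\mathbb R}\alpha_y(x-t)\bigl(e^{w(t)-w_{I(x,y)}}-1\bigr)\,dt\Bigr|^2\frac{dx\,dy}{y} \lesssim 1
\]
over all bounded intervals $I \subset \mathbb R$.

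Next, using $e^z - 1 = z + R(z)$ with $|R(z)| \le \tfrac{1}{2}|z|^2 e^{|z|}$ and again $\int_{\mathbb R}\alpha = 0$ to annihilate the constant $w_{I(x,y)}$, I would split
\[
\int_{\mathbb R}\alpha_y(x-t)\bigl(e^{w(t)-w_{I(x,y)}}-1\bigr)\,dt = (\alpha_y \ast w)(x) + \mathcal{E}(x,y),
\]
where $\mathcal{E}(x,y) := \int_{\mathbb R}\alpha_y(x-t)\,R(w(t)-w_{I(x,y)})\,dt$. The linear part $(\alpha_y \ast w)(x)$ is directly amenable to the classical Fefferman--Stein-type square-function characterization of BMO: since $\alpha$ has mean zero and exponential decay, $|\alpha_y \ast w(x)|^2 y^{-1}\,dx\,dy$ is a Carleson measure with norm $\lesssim \|w\|_*^2$, which is uniformly controlled under $\|w-u_0\|_* < \delta_2$ because $\|w\|_* \le \|u_0\|_* + \delta_2 \le 2\|u_0\|_\infty + \delta_2$.

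The Carleson control of the nonlinear remainder $\mathcal{E}(x,y)$ is the technical heart of the proof. A single Cauchy--Schwarz estimate together with Propositions \ref{prep1} and \ref{nearreal} yields only a pointwise bound of the form
\[
|\mathcal{E}(x,y)|^2 \lesssim C(u_0)\int_{\mathbb R}|\alpha_y(x-t)||w(t)-w_{I(x,y)}|^2\,dt \lesssim C(u_0)\|w\|_*^2,
\]
which is insufficient for a Carleson estimate because $\int_0^{|I|} dy/y$ diverges. To close the argument I would combine this with the $L^\infty$ bound $|\mu_w| \le M_1$ from Lemma \ref{bounded}, observing that $|\mathcal{E}| \le |\mu_w| + |\alpha_y \ast w| \lesssim M_1 + |\alpha_y \ast w|$ gives $|\mathcal{E}|^2 \lesssim M_1|\mathcal{E}| + |\alpha_y \ast w|^2$. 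Together with a finer Cauchy--Schwarz splitting of $\int_{\mathbb R}|\alpha_y||w-w_{I(x,y)}|^2 e^{2|w-w_{I(x,y)}|}\,dt$ that isolates the polynomial and exponential factors (controlled by Propositions \ref{prep1} and \ref{nearreal} respectively), and a dyadic decomposition of the $t$-range into annuli $\{2^{k-1}y \le |t-x| < 2^k y\}$ exploiting the standard BMO change-of-mean bound $|w_{I(x,y)} - w_{2^k I(x,y)}| \lesssim k\|w\|_*$ on each shell, the contribution of $\mathcal{E}$ is absorbed into the Carleson measure generated by the linear part, with a prefactor controlled by $M_1$ and $C(u_0)$. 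This perturbative closure crucially relies on the smallness of $\delta_2$ and on the uniform estimates of Section 5.
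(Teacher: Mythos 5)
Your reduction to the numerator (via $\int_{\mathbb R}\alpha=0$ and the lower bound on $|\beta_y\ast e^{w-w_{I(x,y)}}|$) reproduces the paper's inequality \eqref{onlyalpha}, and the Fefferman--Stein claim for the linear part $(\alpha_y\ast w)(x)$ is correct. The gap is in the treatment of the nonlinear remainder $\mathcal E$, which you yourself identify as the heart of the matter but do not actually close. The proposed absorption is circular: from $|\mathcal E|\lesssim C(u_0)M_1+|\alpha_y\ast w|$ you get $|\mathcal E|^2\lesssim M_1|\mathcal E|+|\alpha_y\ast w|^2$, but to conclude a Carleson bound for $|\mathcal E|^2\,dx\,dy/y$ you would then need a Carleson-type bound for the first power $|\mathcal E|\,dx\,dy/y$, which is exactly as inaccessible (the only available pointwise bound is again $O(1)$, and $\int_0^{|I|}dy/y$ diverges); nothing can be ``absorbed'' into the left-hand side unless $M_1$ were small, which it is not. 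The auxiliary devices you list (finer Cauchy--Schwarz splitting of $\int|\alpha_y|\,|w-w_{I(x,y)}|^2e^{2|w-w_{I(x,y)}|}$, dyadic annuli with the change-of-mean bound) still only produce pointwise estimates of size $\lesssim\|w\|_*^2$, with no decay in $y$ on average, so they never produce the square-function structure a Carleson estimate requires. In fact no estimate of $\mathcal E$ through the quadratic-oscillation bound can suffice: for a square wave $w=u_0$ of period $2^{-n}$ and height $1$ one has $\|w\|_*\lesssim 1$ but $\frac1{|I|}\iint_{I\times(0,|I|)}\bigl(\frac1{2y}\int_{I(x,y)}|w-w_{I(x,y)}|\bigr)^4\frac{dx\,dy}{y}\sim n$, unbounded, while the conclusion of the lemma does hold for such $w$. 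So the cancellation lost by Taylor-expanding $e^z-1$ beyond the linear term cannot be recovered by size estimates alone.

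This is precisely where the paper takes a different route: it never expands the exponential. After \eqref{onlyalpha} it uses the $A_\infty$ property of $e^{ru}$, $r\in[-4,4]$ (with constants depending only on $u_0$), to replace $e^{-u_{I(x,y)}}$ by $C_\infty(e^u)(e^{-u})_{I(x,y)}$, then Cauchy--Schwarz separates the Carleson integral into an $L^4$ bound for the Hardy--Littlewood maximal function of $e^{-u}1_{I(x_0,N_0t)}$ and an $L^4$ bound for the Littlewood--Paley square function $S_\alpha(e^{w}1_{I(x_0,Nt)})$, so the mean-zero cancellation of $\alpha$ is exploited directly at the level of $e^{w}$ (where $e^{\pm4u}\in A_\infty$, the doubling property, and the normalization $u_{I(x_0,N_0t)}=0$ control all the averages), plus a tail estimate over $I(x_0,Nt)^c$ using the exponential decay of $\alpha$. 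If you want to rescue your scheme, you would need an analogous square-function mechanism for the quadratic and higher terms, not the pointwise bounds you invoke; as written, the proof of the key estimate is missing.
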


\begin{proof}
By inequality (\ref{onlyalpha}) in the proof of Lemma \ref{bounded}, we have
\begin{equation}\label{no1}
|\mu_w(x,y)|\lesssim |\alpha_y\ast e^{w-w_{I(x,y)}} (x)|=\frac{|\alpha_y\ast e^{w} (x)|}{e^{u_{I(x,y)}}}
\end{equation}
for $w=u+iv$ if $\Vert w-u_0 \Vert_{*}$ is sufficiently small.
We also note that if $\Vert u-u_0 \Vert_{*}$ is sufficiently small,
then $e^{ru}$ is an $A_\infty$-weight for $r \in [-4,4]$ (see \cite[Corollary IV.5.15]{GR}), 
and moreover, the $A_\infty$-constant $C_\infty(e^{ru})$ for $e^{ru}$  depends only on $u_0$
(see also the proof of \cite[Lemma 2.6]{WM-4}). 
By the property of $A_\infty$-weights in \eqref{iff} and the Cauchy--Schwarz inequality, we have
\begin{equation}\label{no2}
\frac{|\alpha_y\ast e^{w} (x)|}{e^{u_{I(x,y)}}}\\
\leq \frac{C_\infty(e^u)|\alpha_y\ast e^{w} (x)|}{(e^u)_{I(x,y)}}
\leq C_\infty(e^u)|\alpha_y\ast e^{w} (x)|(e^{-u})_{I(x,y)}
\end{equation}
for $I(x,y)=(x-y,x+y)$.

Using inequalities (\ref{no1}) and (\ref{no2}), 
for any bounded interval $I=I(x_0,t/2)$ for $x_0 \in \mathbb R$ and $t>0$, we have
\begin{equation*}
\begin{split}
&\quad \frac{1}{|I|}\int_0^{|I|}\!\int_{I}|\mu_w(x,y)|^2 \frac{dxdy}{y}\\
&\lesssim \frac{C_\infty(e^u)^2}{|I|}\int_0^{|I|}\!\int_{I}|\alpha_y\ast e^{w} (x)|^2((e^{-u})_{I(x,y)})^2 \frac{dxdy}{y}\\
&=\frac{C_\infty(e^u)^2}{t}\int_{I(x_0,t/2)}\! \int_0^t
\left(\frac{1}{|I(x,y)|}\int_{I(x,y)}e^{-u(s)}ds \right)^2 |\alpha_y\ast e^{w} (x)|^2 \frac{dy}{y} dx\\
&=\frac{C_\infty(e^u)^2}{t}\int_{I(x_0,t/2)}\! \int_0^t
\left(\frac{1}{2y}\int_{|x-s|<y}e^{-u(s)}1_{I(x_0,N_0t)}(s)ds \right)^2 |\alpha_y\ast e^{w} (x)|^2 \frac{dy}{y} dx.
\end{split}
\end{equation*}
Here, we have replaced $e^{-u(s)}$ with the product $e^{-u(s)}1_{I(x_0,N_0t)}(s)$ of the characteristic function
by a fact that
if $x \in I(x_0,t/2)$ and $|x -s| < y \leq t$ then $s \in I(x_0,N_0t)$ for any $N_0 \geq 2$.
We will choose $N_0$ sufficiently large and fix it later.
The last line above is estimated further as follows.
\begin{equation}\label{routine}
\begin{split}
&\quad\frac{C_\infty(e^u)^2}{t}\int_{I(x_0,t/2)}\! \int_0^t
\left(\frac{1}{2y}\int_{|x-s|<y}e^{-u(s)}1_{I(x_0,N_0t)}(s)ds \right)^2 |\alpha_y\ast e^{w} (x)|^2 \frac{dy}{y} dx\\
&\leq \frac{C_\infty(e^u)^2}{t}\int_{I(x_0,t/2)} \sup_{y >0} 
\left(\frac{1}{2y}\int_{|x-s|<y}e^{-u(s)}1_{I(x_0,N_0t)}(s)ds \right)^2 
\left(\int_0^t |\alpha_y\ast e^{w} (x)|^2 \frac{dy}{y}\right) dx\\
&\leq C_\infty(e^u)^2\left[\frac{1}{t}\int_{I(x_0,t/2)} \left(\sup_{y >0} \left\{ 
\frac{1}{2y}\int_{|x-s|<y}e^{-u(s)}1_{I(x_0,N_0t)}(s)ds\right\} \right)^4 dx \right]^{1/2}\\
&\qquad \qquad \qquad \qquad \times \left[\frac{1}{t}\int_{I(x_0,t/2)} \left(\int_0^t |\alpha_y\ast e^{w} (x)|^2 \frac{dy}{y}\right)^2 dx \right]^{1/2}. 
\end{split}
\end{equation}

First, we consider the first integral factor of the last line of (\ref{routine}).
Here, we use the $L^4$-boundedness of the Hardy--Littlewood maximal function
$$
M(e^{-u}1_{I(x_0,N_0t)})(x) =\sup_{y >0} 
\left(\frac{1}{2y}\int_{|x-s|<y}e^{-u(s)}1_{I(x_0,N_0t)}(s)ds\right).
$$
Then, there is a constant $C_1>0$ such that
\begin{equation}\label{C_1}
\begin{split}
&\quad \frac{1}{t}\int_{I(x_0,t/2)} \left(\sup_{y >0} \left\{ 
\frac{1}{2y}\int_{|x-s|<y}e^{-u(s)}1_{I(x_0,N_0t)}(s)ds\right\} \right)^4 dx \\
&\leq \frac{1}{t}\int_{\mathbb R} \left(\sup_{y >0} \left\{ 
\frac{1}{2y}\int_{|x-s|<y}e^{-u(s)}1_{I(x_0,N_0t)}(s)ds\right\} \right)^4 dx \\
&= \frac{1}{t}\int_{\mathbb R} \left( M(e^{-u}1_{I(x_0,N_0t)})(x)\right)^4 dx
\leq \frac{C_1}{t}\int_{I(x_0,N_0t)}e^{-4u(s)} ds.
\end{split}
\end{equation}
For the estimate of the last term, we recall that, if $\Vert u-u_0 \Vert_{*}$ is sufficiently small, then 
$e^{-4u}$ is an $A_\infty$-weight. In addition, throughout the estimate of the complex dilatation $\mu_w(x,y)$,
we may assume that $u_{I(x_0,N_0t)}=0$ for the given and fixed constants $x_0$, $t$ and $N_0$ 
because adding a constant to $u$ corresponds to a dilation of $F$, which does not change the complex dilatation $\mu_{w}$. 
(When $x_0$, $t$, or $N_0$ change, we regard that the assumption is renewed accordingly.) 
These facts yield that
\begin{equation}\label{e-4u}
\begin{split}
&\quad \frac{1}{|I(x_0,N_0t)|}\int_{I(x_0,N_0t)}e^{-4u(s)} ds\\
&=\frac{1}{|I(x_0,N_0t)|}\int_{I(x_0,N_0t)}e^{-4(u(s)-u_{I(x_0,N_0t)})} ds\\
&\leq C_\infty(e^{-4u}) \exp \left(\frac{4}{|I(x_0,N_0t)|}\int_{I(x_0,N_0t)}|u(s)-u_{I(x_0,N_0t)}|ds \right)
\leq C_\infty(e^{-4u})e^{4\Vert u \Vert_*}.
\end{split}
\end{equation}
Therefore, by (\ref{C_1}) and (\ref{e-4u}), we have
\begin{equation*}
\begin{split}
\frac{1}{t}\int_{I(x_0,t/2)} \left(\sup_{y >0} \left\{ 
\frac{1}{2y}\int_{|x-s|<y}e^{-u(s)}1_{I(x_0,N_0t)}(s)ds\right\} \right)^4 dx 
\leq 2C_1N_0C_\infty(e^{-4u})e^{4\Vert u \Vert_*},
\end{split}
\end{equation*}
which shows the local boundedness of the first integral factor of the last line of (\ref{routine}).

Next, we consider the second integral factor of the last line of (\ref{routine}).
In the following,
we decompose it further by dividing $\mathbb R$ into the interval $I(x_0,Nt)$ and its complement $I(x_0,Nt)^c$
for a sufficiently large $N>0$:
\begin{equation}\label{N-divide}
\begin{split}
&\quad \frac{1}{t}\int_{I(x_0,t/2)} \left(\int_0^t |\alpha_y\ast e^{w} (x)|^2 \frac{dy}{y}\right)^2 dx\\
&\leq \frac{8}{t} \int_{I(x_0,t/2)}\left(\int_0^t  |\alpha_y\ast (e^{w}1_{I(x_0,Nt)})(x)|^2 \frac{dy}{y}\right)^2dx\\
&\qquad\qquad+\frac{8}{t}\int_{I(x_0,t/2)}\left(\int_0^t |\alpha_y\ast (e^{w}1_{I(x_0,Nt)^c})(x)|^2 \frac{dy}{y} \right)^2dx.
\end{split}
\end{equation}

We consider the first term of the right side of inequality (\ref{N-divide}).
We utilize the Littlewood--Paley operator defined by the rapidly decreasing function $\alpha$ with $\int_{\mathbb R} \alpha(x)dx=0$:
$$
S_\alpha(e^{w}1_{I(x_0,Nt)})(x)
=\left( \int_0^\infty  |\alpha_y\ast (e^{w}1_{I(x_0,Nt)})(x)|^2 \frac{dy}{y}\right)^{1/2}. 
$$
By the $L^4$-boundedness of the Littlewood--Paley operator $S_\alpha$ (see \cite[p.363]{BCP}), 
we have the estimate of the first term: there is some constant $C_2>0$ such that
\begin{equation}\label{firstterm}
\begin{split}
&\quad \frac{8}{t}\int_{I(x_0,t/2)}\left(\int_0^t  |\alpha_y\ast (e^{w}1_{I(x_0,Nt)})(x)|^2 \frac{dy}{y}\right)^2dx\\
& \leq
\frac{8}{t}\int_{\mathbb R} \left( \int_0^\infty  |\alpha_y\ast (e^{w}1_{I(x_0,Nt)})(x)|^2 \frac{dy}{y}\right)^2 dx
=  \frac{8}{t}\int_{\mathbb R} (S_\alpha(e^{w}1_{I(x_0,Nt)})(x))^4 dx\\
& \leq \frac{8C_2}{t}\int_{\mathbb R} |e^{w(x)}|^4 1_{I(x_0,Nt)}(x) dx 
= \frac{8C_2}{t} \int_{I(x_0,Nt)} e^{4u(x)}dx.
\end{split}
\end{equation}

We consider the second term of the right side of inequality (\ref{N-divide}). We have an inequality
\begin{equation*}
\begin{split}
|\alpha_y\ast (e^{w}1_{I(x_0,Nt)^c})(x)|
&\leq \int_{I(x_0,Nt)^c}|\alpha_y(x-s)|e^{u(s)} ds
\end{split}
\end{equation*}
by assuming $x \in I(x_0,t/2)$. 
For this estimate, we use a fact that the $A_\infty$-weight $e^u$ has the doubling property
with some constant $\rho=\rho(e^u)>1$ as in \eqref{doubling}. This constant can be estimated in terms of
the $A_\infty$-constant $C_\infty(e^u)$, and hence this depends only on $u_0$ when $\Vert u-u_0 \Vert_*$ is
sufficiently small.
We also note that $|\alpha|$ is an even function. 

Let $n_0=n_0(y,t,N) \in \mathbb N$ satisfy $2^{n_0-1}=(N-1)t/y$ (we may adjust $N$ so that $n_0$ becomes an integer).
Then, for $x \in I(x_0, t/2)$, we see that
\begin{align*}
\int_{I(x_0,Nt)^c}|\alpha_y(x-s)|e^{u(s)} ds
&\leq \int_{|s-x_0| \geq (N-1)t} e^{u(s)} |\alpha_y(s)|ds\\
&=\sum_{n=n_0}^\infty \int_{2^{n-1}y \leq |s-x_0| < 2^ny} e^{u(s)} |\alpha_y(s)|ds\\
&\leq \sum_{n=n_0}^\infty \rho^n \vert\alpha_y(2^{n-1}y)\vert \int_{|s-x_0| < y}e^{u(s)}ds.
\end{align*}
Here, by $|\alpha(x)| \leq Ce^{-|x|}$, we have $\rho^n \vert\alpha_y(2^{n-1}y)\vert \leq C\rho^n e^{-2^{n-1}}/y$.
For $n \geq n_0(y,t,N)$, we may assume that $\rho^ne^{-2^{n-2}} \leq 1$. In fact, 
by $2^{n_0-1} \geq N-1$, this holds when $N$ is sufficiently large.
Moreover, if $\Vert u - u_0 \Vert_{*} < C_{JN}/2$, then the John--Nirenberg inequality \eqref{JN} implies that 
\begin{equation*}
\begin{split}
\int_{|s-x_0| < y} e^{u(s)} ds & \leq \int_{I(x_0,Nt)}e^{u(s)}ds = \int_{I(x_0,Nt)}e^{u(s) - u_{I(x_0, Nt)}}ds\\
&\leq \frac{2Nte^{2\Vert u_0 \Vert_{\infty}}}{|I(x_0, Nt)|}\int_{I(x_0, Nt)} e^{|u(s) - u_0(s) - u_{I(x_0, Nt)} + (u_0)_{I(x_0, Nt)}|} dt\\
& \leq 2Nte^{2\Vert u_0 \Vert_{\infty}} (2C_0 + 1) = DNt
\end{split}
\end{equation*}
for some constant $D>0$ depending on $\Vert u_0 \Vert_\infty$.  
Therefore, we obtain that if $x \in I(x_0, t/2)$ then
\begin{align*}
|\alpha_y\ast (e^{w}1_{I(x_0,Nt)^c})(x)|
&\leq \frac{CDNt}{y}\sum_{n=n_0}^\infty \frac{\rho^n}{e^{2^{n-2}}}\frac{1}{e^{2^{n-2}}}
\leq  \frac{CDNt}{y}\exp\left(-\frac{Nt}{4y}\right).
\end{align*}

Hence, for $x \in I(x_0,t/2)$,
\begin{equation*}
\begin{split}
\int_0^t |\alpha_y\ast (e^{w}1_{I(x_0,Nt)^c})(x)|^2 \frac{dy}{y} \leq
(CDNt)^2 \int_0^t \exp\left(-\frac{Nt}{2y}\right)\frac{dy}{y^3} \leq (CD)^2N^2 e^{-N/2}.
\end{split}
\end{equation*}
From this, we obtain that
$$
\frac{8}{t} \int_{I(x_0,t/2)} \left(\int_0^t |\alpha_y\ast (e^{w}1_{I(x_0,Nt)^c})(x)|^2 \frac{dy}{y} \right)^2dx
\to 0
$$
as $N \to \infty$ uniformly for $x_0 \in \mathbb R$ and $t>0$.

By the above argument, we can make the second term of the right side of inequality (\ref{N-divide})
arbitrarily small if we choose $N$ sufficiently large. We fix such an $N$ as $N_0$. Then, we return to the estimate of
the first term of the right side of inequality (\ref{N-divide}) as in (\ref{firstterm}).
By the assumption $u_{I(x_0,N_0t)}=0$, we have
\begin{equation}\label{e^u}
\begin{split}
&\quad \frac{8C_2}{t} \int_{I(x_0,N_0t)} e^{4u(x)}dx\\
&=\frac{16C_2N_0}{|I(x_0,N_0t)|} \int_{I(x_0,N_0t)} e^{4(u(x)-u_{I(x_0,N_0t)})}dx\\
&\leq 
16C_2 N_0 C_\infty(e^{4u}) \exp \left( \frac{4}{|I(x_0,N_0t)|} \int_{I(x_0,N_0t)}|u(x)-u_{I(x_0,N_0t)}| dx \right)\\
&\leq 16C_2 N_0 C_\infty(e^{4u})e^{4\Vert u \Vert_*}.
\end{split}
\end{equation}
This shows the local boundedness of the second integral factor of the last line of (\ref{routine}).  

The required constants $\delta_2>0$ and $M_2>0$ can be found from the above arguments. This completes the proof of Lemma \ref{mainlemma}.
\end{proof}

\section{Arguments for holomorphy and real-analycity (Theorem \ref{mains} {\rm (ii)})}
In this section, we will complete the proof of
the second part of Theorem \ref{mains}, whose precise statement is given in Theorem \ref{realana}.
In the previous section, we have seen the local boundedness of
the complex dilatations of the variant of Beurling--Ahlfors extension by heat kernel of strongly symmetric embeddings.
Our remaining task is to show the holomorphy of this dependence.
This step has already appeared in several work, but here we prove this claim from a general viewpoint.

In addition, since the variant of Beurling--Ahlfors extension by heat kernel is available on the real line but not on the unit circle,  we first prove our main result of this section in the real line (Theorem \ref{Linfty}), and then transfer it to the case of the unit circle by using lift and projection maps. 

In the theory of infinite dimensional Teichm\"uller spaces, 
there are several occasions where we have to show the holomorphy of mappings to 
function spaces $F$. Useful criteria and applications can be found in
\cite[Lemma V.5.1]{Le} (Remark: an assumption on the local boundedness of $f:U \to F$ is dropped
in its statement (i)). In the typical case where $F$ is a complex Banach space of
certain measurable functions on a domain $\Omega$ in $\mathbb C$,
we can formulate the lemma below, which is based on the general result cited above
claiming that a weak holomorphy implies the holomorphy under the local boundedness (see also \cite[p.28]{Bour}).
In fact, this kind of arguments appeared in \cite[Lemma 6.1]{ST}, and we extract essential ideas from it and
arrange them in a widely applicable form. In the case where $F$ is of the supremum norm,
we can find a similar result in \cite[Lemma 3.4]{E1}.

Let $F(\Omega)$ be a complex Banach space of measurable functions on a domain $\Omega \subset \mathbb C$.
We say that the norm $\Vert \cdot \Vert$ of this space is of {\it integral type} if 
any measurable function
$$
f:\Omega \times [0,1] \to {\mathbb C}, \qquad (z,s) \mapsto f_s(z)
$$
such that $f_s(z)$ belongs to $F(\Omega)$ for all $s \in [0,1]$ satisfies
$$
\left \Vert \int_0^1 f_s(z)dt \right \Vert \leq \sup_{0 \leq s \leq 1} \Vert f_s(z) \Vert.
$$

\begin{lemma}\label{weakholo}
Let $E$ be a complex Banach space and $F(\Omega)$ a complex Banach space of measurable functions on a domain $\Omega \subset \mathbb C$
with integral type norm $\Vert \cdot \Vert$.
Let $\Lambda:U \to F(\Omega)$ be a bounded function on an open subset $U$ of $E$
that satisfies the following property:
For any $w_0 \in U$ and any $w_1 \in E$, there is an $\epsilon>0$ such that 
$$
\lambda(\zeta)(z)=\Lambda(w_0+\zeta w_1)(z) \in \mathbb C
$$
is a holomorphic function on $\{\zeta \in \mathbb C \mid |\zeta| \leq 2\epsilon\}$
for almost every $z \in \Omega$. Then, $\Lambda$ is holomorphic on $U$.
\end{lemma}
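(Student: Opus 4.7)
The plan is to upgrade the pointwise (in $z$) holomorphy hypothesis to Gâteaux holomorphy of $\Lambda$ as a map into the Banach space $F(\Omega)$, and then invoke the classical principle (cf.\ \cite[Lemma V.5.1]{Le}, \cite[p.28]{Bour}) that a locally bounded Gâteaux-holomorphic map between complex Banach spaces is Fréchet holomorphic. Since $\Lambda$ is bounded on $U$ by hypothesis, local boundedness is automatic, and the only task is to show that for each $w_0 \in U$ and $w_1 \in E$, the one-variable map $\zeta \mapsto \Lambda(w_0 + \zeta w_1)$ is holomorphic from a neighborhood of $0$ in $\mathbb C$ into $F(\Omega)$.

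First I fix $w_0$, $w_1$ and choose $\epsilon>0$, shrinking the given one if necessary, so that $w_0 + \zeta w_1 \in U$ for all $|\zeta| \leq 2\epsilon$. Setting $M = \sup_{w \in U}\Vert \Lambda(w)\Vert$, I apply the scalar Cauchy formula pointwise on $\Omega$ to write
$$
\lambda(\zeta)(z) \;=\; \frac{1}{2\pi i}\int_{|\xi|=2\epsilon}\frac{\lambda(\xi)(z)}{\xi-\zeta}\,d\xi \qquad (|\zeta|<2\epsilon,\ \text{a.e.\ } z\in\Omega),
$$
and define the candidate Taylor coefficients
$$
a_n(z) \;=\; \frac{1}{2\pi i}\int_{|\xi|=2\epsilon}\frac{\lambda(\xi)(z)}{\xi^{n+1}}\,d\xi.
$$
Parametrizing the circle by $\xi = 2\epsilon e^{2\pi i s}$ with $s\in[0,1]$, each of these integrals becomes an integral over $[0,1]$ of a measurable family $s\mapsto f_s\in F(\Omega)$ with $\Vert f_s\Vert \leq M(2\epsilon)^{-n}$ uniformly in $s$. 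The integral-type property of $\Vert\cdot\Vert$ then promotes these pointwise identities to identities in $F(\Omega)$, yielding $a_n\in F(\Omega)$ with $\Vert a_n\Vert \leq M(2\epsilon)^{-n}$. Expanding the Cauchy kernel as a geometric series in $\zeta/\xi$ and applying the same integral-type estimate term by term produces the norm-convergent expansion
$$
\Lambda(w_0+\zeta w_1) \;=\; \sum_{n=0}^{\infty} a_n\,\zeta^n \qquad \text{in } F(\Omega),\ \ |\zeta|<2\epsilon,
$$
which is exactly the required one-variable holomorphy.

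The main, and essentially only, delicate point is the transfer from pointwise Cauchy identities on $\Omega$ to $F(\Omega)$-valued identities; this is precisely what the integral-type hypothesis on $\Vert\cdot\Vert$ is designed to supply, and measurability of the parametrized integrand in $(s,z)$ follows from the measurability of $\Lambda(w_0+\xi w_1)(z)$. Once this transfer is secured, Gâteaux holomorphy of $\Lambda$ is established along every complex affine line through every $w_0\in U$, and the cited general principle, combined with the global boundedness of $\Lambda$, delivers the conclusion that $\Lambda$ is holomorphic on $U$.
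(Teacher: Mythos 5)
Your proposal is correct and follows essentially the same route as the paper: both transfer the pointwise scalar Cauchy formula to an $F(\Omega)$-valued statement via the integral-type property of the norm, thereby establishing G\^ateaux (one-variable) holomorphy along affine lines, and then invoke the standard fact that a locally bounded G\^ateaux-holomorphic map is Fr\'echet holomorphic. The only cosmetic difference is that you produce a full norm-convergent power series expansion (with the tail of the geometric series controlled by the same integral-type estimate), whereas the paper estimates the first-order Taylor remainder of the difference quotient directly from the Cauchy integral; both yield the same conclusion.
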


\begin{proof}
We will show that $\Lambda$ is G\^ateaux holomorphic.
By assumption, $\lambda(\zeta)(z)$ is holomorphic on $\{\zeta \in \mathbb C \mid |\zeta| \leq 2\epsilon\}$
for almost every $z \in \Omega$. Fix such $z$.
By using the Cauchy integral formula, we have
\begin{align*}
&\quad\ \lambda(\zeta)(z) - \lambda(\zeta_0)(z) 
- (\zeta - \zeta_0) \left.\frac{d}{d\zeta}\right|_{\zeta = \zeta_0}\lambda(\zeta)(z)\\
& = \frac{1}{2\pi i}\oint_{|\tau| = 2\epsilon} \lambda(\tau)(z) \left(\frac{1}{\tau - \zeta} - \frac{1}{\tau - \zeta_0} 
- \frac{\zeta - \zeta_0}{(\tau - \zeta_0)^2}  \right)d\tau\\
&= \frac{(\zeta - \zeta_0)^2}{2\pi i}\oint_{|\tau| = 2\epsilon} \frac{\lambda(\tau)(z)}{(\tau - \zeta_0)^2(\tau - \zeta)} d\tau
\end{align*}
for any $\zeta_0$ and $\zeta$ with $|\zeta_0| <\epsilon$ and $|\zeta| <\epsilon$.
Let $M=\sup_{w \in U} \Vert \Lambda(w) \Vert<\infty$.
Then, the integral type property of the norm $\Vert \cdot \Vert$ implies that 
\begin{align*}
\left\Vert \frac{\lambda(\zeta)(z) - \lambda(\zeta_0)(z)}{\zeta - \zeta_0} - 
\left.\frac{d}{d\zeta}\right|_{\zeta = \zeta_0}\lambda(\zeta)(z) \right  \Vert
\leq \frac{2M}{\epsilon^2} |\zeta - \zeta_0|. 
\end{align*}
Consequently, the limit
$$
\lim_{\zeta\to \zeta_0} \frac{\lambda(\zeta) -  \lambda(\zeta_0)}{\zeta - \zeta_0} 
=  \left.\frac{d}{d\zeta}\right|_{\zeta = \zeta_0}\lambda(\zeta)
$$
exists in $F(\Omega)$. This shows that $\Lambda$ is G\^ateaux holomorphic on $U$. 
It is known that a locally bounded G\^ateaux holomorphic function is holomorphic in the sense that
it is Fr\'echet differentiable (see \cite[Theorem 14.9]{Ch} and \cite[Theorem 36.5]{Mu}).
Thus, we see that $\Lambda$ is holomorphic on $U$.
\end{proof}

We return to the setting in Section 5 concerning the investigation of 
the variant of Beurling--Ahlfors extension by heat kernel.
We apply the above lemma to obtain the following theorem.

\begin{theorem}\label{Linfty}
For each $u_0 \in L_{\mathbb R}^\infty(\mathbb R)$, there is a neighborhood $U(u_0)$ in ${\rm BMO}(\mathbb R)$
such that the complex dilatation $\mu_w$ of the variant of Beurling--Ahlfors extension by heat kernel
$F_w:\mathbb U \to \mathbb C$ 
belongs to $\mathcal M(\mathbb U)$ for every $w \in U(u_0)$. Moreover, the map $\Lambda_{u_0}: U(u_0) \to \mathcal M(\mathbb U)$
defined by $w \mapsto \mu_w$ is holomorphic.  This in particular implies that the map $\Lambda_{u_0}$ restricted on $U(u_0) \cap {\rm BMO}_{\mathbb R}(\mathbb R)$ is real-analytic. 
\end{theorem}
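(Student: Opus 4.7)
The plan is to combine the local boundedness estimates in Lemmas \ref{bounded} and \ref{mainlemma} with the criterion of Lemma \ref{weakholo}, which promotes pointwise holomorphy in the parameter to Fr\'echet holomorphy valued in a function space. We set $U(u_0) = \{w \in {\rm BMO}(\mathbb R) : \Vert w - u_0 \Vert_* < \delta\}$ with $\delta = \min(\delta_1, \delta_2)$, so that Lemmas \ref{bounded} and \ref{mainlemma} give $\Lambda_{u_0}(U(u_0)) \subset \mathcal L(\mathbb U)$ with uniformly bounded $\mathcal L$-norm. We then apply Lemma \ref{weakholo} with $F(\Omega) = \mathcal L(\mathbb U)$ on $\Omega = \mathbb U$, and at the end shrink $U(u_0)$ so that its image lands inside the open subset $\mathcal M(\mathbb U) \subset \mathcal L(\mathbb U)$.

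Both summands of the norm $\Vert \cdot \Vert_{\mathcal L} = \Vert \cdot \Vert_\infty + \Vert \cdot \Vert_c$ are of integral type: the $L^\infty$-part by the triangle inequality, and the Carleson-part because Cauchy--Schwarz yields $|\int_0^1 f_s(z)\,ds|^2 \leq \int_0^1 |f_s(z)|^2\,ds$, which after integration over sectors gives $\Vert \int_0^1 f_s\,ds \Vert_c \leq \sup_s \Vert f_s \Vert_c$. Consequently, the $\mathcal L$-norm itself satisfies the integral-type inequality up to a harmless factor of two, which does not affect the conclusion of Lemma \ref{weakholo}.

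For the pointwise holomorphy along any complex line, fix $w_0 \in U(u_0)$ and $w_1 \in {\rm BMO}(\mathbb R)$, and choose $\epsilon > 0$ so that $\Vert w_0 - u_0 \Vert_* + 2\epsilon \Vert w_1 \Vert_* < \delta_1$. Then for $|\zeta| \leq 2\epsilon$ the perturbed parameter $w_\zeta = w_0 + \zeta w_1$ stays within the $\delta_1$-ball around $u_0$. Using \eqref{alphabeta},
\begin{equation*}
\mu_{w_\zeta}(x,y) = \frac{(\alpha_y \ast e^{w_\zeta})(x)}{(\beta_y \ast e^{w_\zeta})(x)},
\end{equation*}
and the exponential decay of $\alpha, \beta$ together with the John--Nirenberg exponential integrability of $w_0, w_1$ guarantees absolute convergence of both convolutions, so numerator and denominator are each entire in $\zeta$ on $\{|\zeta| \leq 2\epsilon\}$. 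The proof of Lemma \ref{bounded} also supplies a uniform positive lower bound on $|(\beta_y \ast e^{w_\zeta})(x)|$ throughout this disk, so the quotient is holomorphic in $\zeta$ for each fixed $(x,y) \in \mathbb U$.

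Applying Lemma \ref{weakholo} then yields that $\Lambda_{u_0}: U(u_0) \to \mathcal L(\mathbb U)$ is holomorphic, hence continuous in the $\mathcal L$-norm and in particular in the $L^\infty$-norm. Since Theorem \ref{FKP} gives $\mu_{u_0} \in M(\mathbb U)$ and $M(\mathbb U)$ is open in $L^\infty(\mathbb U)$, we may shrink $U(u_0)$ once more to ensure $\Lambda_{u_0}(U(u_0)) \subset \mathcal M(\mathbb U) = M(\mathbb U) \cap \mathcal L(\mathbb U)$, which gives the first assertion. The real-analyticity of the restriction to $U(u_0) \cap {\rm BMO}_{\mathbb R}(\mathbb R)$ is then automatic, since ${\rm BMO}(\mathbb R)$ is the complexification of its totally real subspace ${\rm BMO}_{\mathbb R}(\mathbb R)$ and holomorphic maps between complex Banach spaces restrict to real-analytic maps on such subspaces. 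The main technical step is the pointwise holomorphy: one must balance the exponential decay of $\alpha, \beta$ against the possible BMO growth of $w_0, w_1$ to secure both absolute convergence and a uniform non-vanishing lower bound on the denominator throughout a fixed complex disk.
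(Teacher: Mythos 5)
Your proposal is correct and follows essentially the same route as the paper: local boundedness of $w\mapsto\mu_w$ in $\mathcal L(\mathbb U)$ from Lemmas \ref{bounded} and \ref{mainlemma}, verification that $\Vert\cdot\Vert_\infty+\Vert\cdot\Vert_c$ is of integral type, pointwise holomorphy in $\zeta$ from the explicit convolution formula \eqref{alphabeta} with a nonvanishing denominator, Lemma \ref{weakholo} to get holomorphy into $\mathcal L(\mathbb U)$, and a final shrinking of the neighborhood to land in the open set $\mathcal M(\mathbb U)$. The only small adjustment: choose $\epsilon$ so that $w_0+\zeta w_1$ stays in the $\delta$-ball with $\delta=\min(\delta_1,\delta_2)$ (not merely the $\delta_1$-ball), since the Cauchy-integral estimate in Lemma \ref{weakholo} needs the full $\mathcal L$-norm bound, including the Carleson part from Lemma \ref{mainlemma}, on the circle $|\zeta|=2\epsilon$.
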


\begin{proof}
We first prove that $\Lambda_{u_0}$ is a holomorphic map to $\mathcal L(\mathbb U)$
on a neighborhood 
$$
\widetilde U(u_0)=\{w \in {\rm BMO}(\mathbb R) \mid \Vert w-u_0 \Vert_* <\delta\}
$$ 
of $u_0$, where $\delta=\min\{\delta_1,\delta_2\}>0$ is defined from Lemmas \ref{bounded} and \ref{mainlemma}.
Then, $\Lambda_{u_0}$ is bounded on $\widetilde U(u_0)$.
Moreover, we choose $\epsilon$ with
$0 < \epsilon < (\delta - \Vert w_0 \Vert_{*})/(2\Vert w_1 \Vert_{*})$
so that $w_0+\zeta w_1 \in \widetilde U(u_0)$ when $|\zeta| \leq 2\epsilon$.
Explicit representation (\ref{alphabeta}) shows that
the complex-valued function $\lambda(\zeta)(z)=\Lambda_{u_0}(w_0+\zeta w_1)(z)$ 
is holomorphic on $\{\zeta \in \mathbb C \mid |\zeta| \leq 2\epsilon\}$ with $z \in \mathbb{U}$ fixed.
To prove the holomorphy of $\Lambda_{u_0}$,
it suffices to see that the norm $\Vert \cdot \Vert_\infty+\Vert \cdot \Vert_c$ on $\mathcal L(\mathbb U)$
is of integral type by Lemma \ref{weakholo}. 

For a measurable function
$$
\mu:\mathbb U \times [0,1] \to {\mathbb C}, \qquad (z,s) \mapsto \mu_s(z)
$$
such that $\mu_s(z)$ belongs to $\mathcal L(\mathbb U)$ for all $s \in [0,1]$,
we have
$$
\left \Vert \int_0^1 \mu_s(z)ds \right \Vert_\infty 
\leq \sup_{0 \leq s \leq 1} \Vert \mu_s(z) \Vert_\infty;
$$
\begin{align*}
\left \Vert \int_0^1 \mu_s(z)ds \right \Vert_c^2&=
\sup_{x_0 \in {\mathbb R}, \ t>0}\frac{1}{t}\int_{0}^{t}\int_{I(x_0, t/2)}  
\left| \int_0^1 \mu_s(z)ds\right|^2 \frac{dxdy}{y} \\
&\leq
\sup_{x_0 \in {\mathbb R}, \ t>0}\frac{1}{t}\int_{0}^{t}\int_{I(x_0, t/2)}  
\left( \int_0^1 |\mu_s(z)|^2ds \right)\frac{dxdy}{y} \\
&\leq 
\int_0^1 \left(\sup_{x_0 \in {\mathbb R}, \ t>0} \frac{1}{t}\int_{0}^{t}\int_{I(x_0, t/2)}  
|\mu_s(z)|^2 \frac{dxdy}{y} \right)ds 
\leq \sup_{0 \leq s \leq 1} \Vert \mu_s(z) \Vert_c^2. 
\end{align*}
These inequalities show that $\Vert \cdot \Vert_\infty+\Vert \cdot \Vert_c$ is of integral type.

Thus, we obtain that $\Lambda_{u_0}:\widetilde U(u_0) \to \mathcal L(\mathbb U)$ is holomorphic.
In particular, $\Lambda_{u_0}$ is continuous there.
Since $\Lambda_{u_0}(u_0) \in \mathcal{M}(\mathbb U)$ by Theorem \ref{FKP} and 
$\mathcal{M}(\mathbb U)$ is open in $\mathcal{L}(\mathbb U)$, by taking a smaller neighborhood $U(u_0) \subset \widetilde U(u_0)$,
the image of $U(u_0)$
under $\Lambda_{u_0}$ is contained in $\mathcal{M}(\mathbb U)$. 
\end{proof}

\begin{remark}
Lemma \ref{weakholo} can be widely applied for showing holomorphy in the infinite dimensional Teichm\"uller theory
because almost all complex Banach spaces used in this theory are of integral type norms such as the spaces of
Beltrami differentials and the spaces of holomorphic differentials.
The point-wise holomorphy is also easily verified.
Proposition \ref{right} and Theorem \ref{Linfty} are two cases of all. 
\end{remark}

Finally, we transfer the result of Theorem \ref{Linfty} applied to 
${\rm VMO}_{\mathbb R}(\mathbb R) \subset {\rm BMO}_{\mathbb R}(\mathbb R)$
to the case of the unit circle $\mathbb S$. 
We consider the lift $\tilde u$ of an element $u$ of ${\rm VMO}_{\mathbb R}(\mathbb S)$ to $\mathbb R$ under
the universal covering $\mathbb R \to \mathbb S$ given by $x \mapsto e^{2\pi i x}$. Namely, we define $\tilde u(x) = u(e^{2\pi i x})$ so that it satisfies
$\tilde u(x+1)=\tilde u(x)$. 
From this periodicity,   
$\Vert u \Vert_* \leq \Vert \tilde u \Vert_* \leq 3\Vert u \Vert_*$ holds by \cite[Lemma 2.2]{Pa}, and $\tilde u$ belongs to 
${\rm VMO}_{\mathbb R}(\mathbb R)$ by $u \in {\rm VMO}_{\mathbb R}(\mathbb S)$. 
Since this correspondence ${\rm VMO}_{\mathbb R}(\mathbb S) \to {\rm VMO}_{\mathbb R}(\mathbb R)$ is a bounded linear map
by this property,
it is in particular real-analytic.

We denote the set of all such lifts by ${\rm VMO}_{\mathbb R}^{\#}(\mathbb R)$. 
Then, ${\rm VMO}_{\mathbb R}^{\#}(\mathbb R)$ is a real Banach subspace of ${\rm VMO}_{\mathbb R}(\mathbb R)$. 
Moreover, we see that
$L^{\infty}_{\mathbb R}(\mathbb R)$ is dense in ${\rm VMO}^{\#}_{\mathbb R}(\mathbb R)$ since $L^{\infty}_{\mathbb R}(\mathbb S)$ is dense in ${\rm VMO}_{\mathbb R}(\mathbb S)$ under the BMO topology induced by the norm $\Vert \cdot \Vert_{*}$ 
(see \cite {Sa}, \cite[Theorem VI.5.1]{Ga}). 
Hence, the map $\Lambda_{u_0}$, which is given locally in Theorem \ref{Linfty}, can be extended globally to some neighborhood of
${\rm VMO}^{\#}_{\mathbb R}(\mathbb R)$ by the same correspondence $w \mapsto \mu_w$. 
We denote its restriction to ${\rm VMO}^{\#}_{\mathbb R}(\mathbb R)$ by $\Lambda$. Moreover, we also consider
the closed subspace of $\mathcal{M}_0(\mathbb U)$ consisting of all elements $\tilde \mu$ satisfying the periodicity
$\tilde \mu(z+1)=\tilde \mu(z)$, which is denoted by $\mathcal{M}^{\#}_0(\mathbb U)$.

\begin{proposition}
$\Lambda$ is a real-analytic map from ${\rm VMO}^{\#}_{\mathbb R}(\mathbb R)$ to $\mathcal{M}^{\#}_0(\mathbb U)$.
\end{proposition}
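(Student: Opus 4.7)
The plan is to verify two properties: first that $\Lambda(w)$ lies in $\mathcal{M}^{\#}_0(\mathbb U)$ for every $w \in {\rm VMO}^{\#}_{\mathbb R}(\mathbb R)$, and then that $\Lambda$ is real-analytic at every point of this domain.

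For the image, given $w \in {\rm VMO}^{\#}_{\mathbb R}(\mathbb R)$, I would first invoke the density of $L^\infty_{\mathbb R}(\mathbb R)$ in ${\rm VMO}^{\#}_{\mathbb R}(\mathbb R)$ noted just before the proposition to write $w = u_0 + v$ with $u_0 \in L^\infty_{\mathbb R}(\mathbb R)$ and $\Vert v \Vert_*$ as small as desired. The stability of the $A_\infty$-class under small BMO perturbations (used already in Section 5) then gives $e^w \in A_\infty$, so Theorem \ref{FKP} yields $\mu_w \in \mathcal{M}_0(\mathbb U)$. Periodicity is immediate from the convolution representations $F_{\bar z} = e^w \ast \alpha_y$ and $F_z = e^w \ast \beta_y$ in \eqref{alphabeta}: $1$-periodicity of $w$ passes to $e^w$, hence to its convolutions in the $x$-variable, and so to their quotient $\mu_w$. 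Thus $\mu_w \in \mathcal{M}^{\#}_0(\mathbb U)$.

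For real-analyticity, I would fix an arbitrary $u_1 \in {\rm VMO}^{\#}_{\mathbb R}(\mathbb R)$ and, again by density, choose $u_0 \in L^\infty_{\mathbb R}(\mathbb R)$ with $\Vert u_1 - u_0 \Vert_* < \delta(u_0)/2$, where $\delta(u_0) > 0$ is the radius of the neighborhood $U(u_0)$ provided by Theorem \ref{Linfty}. Then some open $\Vert \cdot \Vert_*$-ball around $u_1$ in ${\rm VMO}^{\#}_{\mathbb R}(\mathbb R)$ sits inside $U(u_0) \cap {\rm BMO}_{\mathbb R}(\mathbb R)$. Theorem \ref{Linfty} asserts that the restriction of $\Lambda_{u_0}$ to $U(u_0) \cap {\rm BMO}_{\mathbb R}(\mathbb R)$ is real-analytic with values in $\mathcal{M}(\mathbb U)$, and on the said ball this restriction coincides with $\Lambda$ by construction. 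By the previous paragraph the values all lie in the closed subspace $\mathcal{L}^{\#}_0(\mathbb U) \subset \mathcal{L}(\mathbb U)$, and the standard fact that a real-analytic map whose image lies in a closed subspace is real-analytic into that subspace (the homogeneous polynomial coefficients in the power series land in the closed subspace by passing to limits) upgrades this to real-analyticity of $\Lambda$ into $\mathcal{M}^{\#}_0(\mathbb U)$ near $u_1$.

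Since the two heavy ingredients, namely the local holomorphy proved in Theorem \ref{Linfty} and the vanishing Carleson statement of Theorem \ref{FKP}, are already in place, the main obstacle is really organizational: ensuring that the neighborhoods $U(u_0)$ from Theorem \ref{Linfty}, indexed by $u_0 \in L^\infty_{\mathbb R}(\mathbb R)$, cover all of ${\rm VMO}^{\#}_{\mathbb R}(\mathbb R)$. This is exactly what the density of $L^\infty_{\mathbb R}$ in ${\rm VMO}^{\#}_{\mathbb R}(\mathbb R)$ in the BMO-norm supplies, which is the key structural feature setting the VMO case apart from the BMO case mentioned in the remark after Corollary \ref{mainss}.
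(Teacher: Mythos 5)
Your proposal is correct and follows essentially the same route as the paper: real-analyticity comes from Theorem \ref{Linfty} together with the density of $L^\infty_{\mathbb R}(\mathbb R)$ in ${\rm VMO}^{\#}_{\mathbb R}(\mathbb R)$ (so that the neighborhoods $U(u_0)$ cover the domain), and membership of the image in $\mathcal M^{\#}_0(\mathbb U)$ comes from Theorem \ref{FKP} plus the period-preserving property of the extension. The extra details you supply (periodicity via the convolution formulas \eqref{alphabeta}, the $A_\infty$ perturbation step, and the closed-subspace upgrade of the target from $\mathcal L(\mathbb U)$ to $\mathcal L_0^{\#}(\mathbb U)$) are routine points the paper leaves implicit, not a different method.
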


\begin{proof}
By Theorem \ref{Linfty}, we see that the map $\Lambda$ from 
${\rm VMO}_{\mathbb R}^{\#}(\mathbb R)$ into $\mathcal M(\mathbb U)$ defined by $u \mapsto \mu_{u}$ is real-analytic. 
Further, by Theorem \ref{FKP} together with the period preserving property of this extension,
the image of ${\rm VMO}_{\mathbb R}^{\#}(\mathbb R)$ under 
$\Lambda$ is contained in $\mathcal M^{\#}_0(\mathbb U)$. Consequently, 
$\Lambda: {\rm VMO}_{\mathbb R}^{\#}(\mathbb R) \to \mathcal M^{\#}_0(\mathbb U)$ is real-analytic.
\end{proof}

By the periodicity of $\tilde u \in {\rm VMO}_{\mathbb R}^{\#}(\mathbb R)$ under $x \mapsto x+1$, we see that 
the variant of Beurling--Ahlfors extension by heat kernel $F_{\tilde u}$
satisfies $F_{\tilde u}(z+1)=F_{\tilde u}(z)+1$.  
Thus, $F_{\tilde u}$ can be projected to a quasiconformal homeomorphism $G_u$ of 
the punctured disk $\mathbb D\backslash \{0\}$ onto itself such that 
$G_u(e^{2\pi iz}) = e^{2\pi iF_{\tilde u}(z)}$ for $z \in \mathbb U$. 
Clearly, $G_u$ can be extended quasiconformally
to $0$, and the resulting mapping from $\mathbb D$ onto itself is still denoted by $G_u$.  
Its complex dilatation is denoted by $\nu_u$.

Summing up, for any $u \in {\rm VMO}_{\mathbb R}(\mathbb S)$, we have the quasiconformal homeomorphism $G_u$ of 
$\mathbb D$ onto itself with complex dilatation $\nu_u$. We regard this $G_u$ as
the variant of Beurling--Ahlfors extension by heat kernel transferred to the unit disk.
The map defined by the correspondence $u \mapsto \nu_u$ is denoted by $\widetilde \Lambda$.

\begin{theorem}\label{realana}
$\widetilde \Lambda$ is a real-analytic map from ${\rm VMO}_{\mathbb R}(\mathbb S)$ to $\mathcal M_0(\mathbb D)$.
\end{theorem}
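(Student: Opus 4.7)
The plan is to realize $\widetilde\Lambda$ as the composition of three real-analytic maps,
$$
\widetilde\Lambda = P \circ \Lambda \circ L,
$$
where $L:{\rm VMO}_{\mathbb R}(\mathbb S) \to {\rm VMO}_{\mathbb R}^{\#}(\mathbb R)$ is the lift $u \mapsto \tilde u$ given by $\tilde u(x)=u(e^{2\pi i x})$, $\Lambda$ is the real-analytic map from the preceding proposition sending $\tilde u$ to $\mu_{\tilde u}$, and $P:\mathcal M^{\#}_0(\mathbb U) \to \mathcal M_0(\mathbb D)$ is the pushforward of Beltrami coefficients through the universal covering $\pi(z)=e^{2\pi i z}$. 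Each factor then has to be shown to be real-analytic.

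The map $L$ is bounded linear, hence real-analytic, by the norm comparison $\Vert u \Vert_* \le \Vert \tilde u \Vert_* \le 3\Vert u \Vert_*$ already recorded above. The map $\Lambda$ is real-analytic by the preceding proposition. For $P$, I would write out its action explicitly: because $\pi$ is holomorphic and $G_u \circ \pi = \pi \circ F_{\tilde u}$, the chain rule yields
$$
\nu_u(w) = \mu_{\tilde u}(z)\cdot\frac{\pi'(z)}{\overline{\pi'(z)}} = -\mu_{\tilde u}(z)\,\frac{w}{\bar w}, \qquad w=\pi(z).
$$
The $1$-periodicity of $\mu_{\tilde u}$ in $z$ makes this well-defined on $\mathbb D\setminus\{0\}$, and since $-w/\bar w$ is unimodular the assignment $\mu_{\tilde u}\mapsto \nu_u$ is linear in $\mu_{\tilde u}$ with $\Vert \nu_u\Vert_\infty = \Vert \mu_{\tilde u}\Vert_\infty$.

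The remaining step, which I expect to be the main technical point, is to verify that $P$ also respects the (vanishing) Carleson norm up to a uniform constant. A Carleson sector $S_{h,\theta_0}\subset\mathbb D$ of small height $h$ lifts, inside a fundamental strip of $\pi$, to a region comparable to a Carleson box $I\times(0,|I|)\subset\mathbb U$ with $|I|\asymp h$; the $1$-periodicity of $\mu_{\tilde u}$ then passes the supremum over arcs of $\mathbb S$ to the supremum over bounded intervals of $\mathbb R$ with uniformly equivalent values, and the same comparison transfers the vanishing condition. Some care is required for sectors whose base arcs on $\mathbb S$ are large enough to wrap nontrivially around the circle, but periodicity of $\mu_{\tilde u}$ handles these too. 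Once $P$ is confirmed to be a bounded linear map between the relevant Banach spaces, hence real-analytic, the theorem follows by composing the three real-analytic factors $L$, $\Lambda$, and $P$.
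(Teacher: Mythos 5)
Your proposal follows essentially the same route as the paper: the identical three-fold factorization of $\widetilde\Lambda$ into the bounded linear lift $u\mapsto\tilde u$, the map $\Lambda$ of the preceding proposition, and the linear pushforward $\mu_{\tilde u}\mapsto\nu_u$ through $z\mapsto e^{2\pi i z}$, with the same chain-rule formula $\nu_u(w)=-\mu_{\tilde u}(z)w/\bar w$ and with the Carleson-norm transfer correctly identified as the remaining computation. The paper carries out that transfer by splitting into sectors with $h\in(\tfrac{1}{2},1]$ and $h\in(0,\tfrac{1}{2}]$, bounding the inner region $|\zeta|<\tfrac{1}{2}$ simply by $\Vert\nu_u\Vert_\infty$, so that the resulting estimate is $\Vert\nu_u\Vert_\infty+\Vert\nu_u\Vert_c\le C(\Vert\mu_{\tilde u}\Vert_\infty+\Vert\mu_{\tilde u}\Vert_c)$, which suffices for boundedness of your map $P$; your suggestion of invoking periodicity for the large sectors would also work.
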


\begin{proof}
The complex dilatations $\mu_{\tilde u}$ of $F_{\tilde u}$ and $\nu_u$ of $G_u$ satisfy 
$$
\nu_u(e^{2\pi i z})\overline{e^{2\pi iz}}/e^{2\pi iz} = -\mu_{\tilde u}(z)
$$ 
for $z \in \mathbb U$, and 
in particular, 
$\Vert \nu_u \Vert_{\infty} = \Vert \mu_{\tilde u} \Vert_{\infty}$. 
We will further prove that there exists a constant $C > 0$ such that 
$$
\Vert \nu_u \Vert_{\infty} + \Vert \nu_u \Vert_{c} \leq C (\Vert \mu_{\tilde u} \Vert_{\infty} + \Vert \mu_{\tilde u} \Vert_{c}),
$$
and moreover $\nu_u \in \mathcal M_0(\mathbb D)$ by $\mu_{\tilde u} \in \mathcal M^{\#}_0(\mathbb U)$. These properties show that the projection from $\mathcal M^{\#}_0(\mathbb U)$ to $\mathcal M_0(\mathbb D)$ defined by $\mu_{\tilde u} \mapsto \nu_u$ is a bounded linear map, and thus it is real-analytic. 

For any $h \in (0,1]$ and $\theta_0 \in [0,2\pi)$, we take a sector $S_{h,\theta_0}$ defined by \eqref{sector}.
If $h \in (\frac{1}{2},1]$, then
by $e^{2\pi y} - 1 \geq 2\pi y$ for $y > 0$, 
we have
\begin{equation}\label{case1}
\begin{split}
&\quad\frac{1}{h}\iint_{S_{h,\theta_0}} \frac{|\nu_u (\zeta)|^2}{1 - |\zeta|^2} d\xi d\eta \leq \frac{1}{h}\iint_{|\zeta| < \frac{1}{2}} \frac{|\nu_u  (\zeta)|^2}{1 - |\zeta|^2} d\xi d\eta + \frac{1}{h}\iint_{\frac{1}{2} \leq |\zeta|<1} \frac{|\nu_u (\zeta)|^2}{1 - |\zeta|^2} d\xi d\eta \\
& \leq \frac{2\pi}{h} \int_0^{1/2} \frac{rdr}{1-r^2}\,\Vert \nu_{u} \Vert_{\infty}^{2} + \frac{4\pi^2}{h}\int_{0}^{\log 2/(2\pi)}\int_{-1/2}^{1/2} \frac{|\nu_u (e^{2\pi iz})|^2}{1 - |e^{2\pi iz}|^2}|e^{2\pi iz}|^2 dxdy\\
& \leq \frac{\pi}{3h} \Vert \mu_{\tilde u} \Vert_{\infty}^{2} + \frac{\pi}{h} \int_{0}^{1}\int_{-1/2}^{1/2} \frac{|\mu_{\tilde u}(z)|^2}{y} dxdy \lesssim \Vert \mu_{\tilde u} \Vert_{\infty}^{2}+\Vert \mu_{\tilde u} \Vert_{c}^{2}.
\end{split}
\end{equation}
Similarly, if $h \in (0, \frac{1}{2}]$, we have
\begin{equation}\label{firstline}
\begin{split}
\quad\frac{1}{h}\iint_{S_{h,\theta_0}} \frac{|\nu_u (\zeta)|^2}{1 - |\zeta|^2} d\xi d\eta 
& \leq \frac{\pi}{h} \int_{0}^{\frac{1}{2\pi}\log \frac{1}{1-h}}\int_{\theta_0/(2\pi)- h/2}^{\theta_0/(2\pi)+h/2} \frac{|\mu_{\tilde u}(z)|^2}{y} dxdy\\
& \leq \frac{\pi}{h} \int_{0}^{h}\int_{\theta_0/(2\pi)- h/2}^{\theta_0/(2\pi)+h/2} \frac{|\mu_{\tilde u}(z)|^2}{y} dxdy 
\lesssim \Vert \mu_{\tilde u}\Vert_c^2. 
\end{split}
\end{equation}

Therefore, we have $\Vert \nu_u  \Vert_{\infty} + \Vert \nu_u  \Vert_{c} \leq C (\Vert \mu_{\tilde u} \Vert_{\infty} + \Vert \mu_{\tilde u} \Vert_{c})$ for some constant $C > 0$. On the other hand, we also see that the left side of the first line of \eqref{firstline} tends to $0$ uniformly as $h \to 0$ since the right side of the first line of \eqref{firstline} tends to $0$ uniformly as $h \to 0$ by 
$\mu_{\tilde u} \in \mathcal M_0(\mathbb U)$. Then, we have  $\nu_u \in \mathcal M_0(\mathbb D)$.  
 
We can conclude that $\widetilde \Lambda$ is the composition of the following three maps:
\begin{align*}
{\rm VMO}_{\mathbb R}(\mathbb S) \ni u &\mapsto \tilde u \in {\rm VMO}_{\mathbb R}^{\#}(\mathbb R),\\ 
{\rm VMO}_{\mathbb R}^{\#}(\mathbb R) \ni \tilde u &\mapsto  \mu_{\tilde u} \in \mathcal M^{\#}_0(\mathbb U),\ {\rm and}\\
\mathcal M_0^{\#}(\mathbb U) \ni \mu_{\tilde u} &\mapsto \nu_u  \in \mathcal M_0(\mathbb D),
\end{align*}
each of which is real-analytic. Thus, the composition $\widetilde \Lambda$ is real-analytic. This completes the proof of Theorem \ref{realana}.
\end{proof}

\end{document}